\title{Strongly minimal reducts of valued fields}
\author[P. KOWALSKI]{Piotr Kowalski$^{\spadesuit}$}
\thanks{$^{\spadesuit}$Supported by T\"{u}bitak grant 2221 and NCN grant 2012/07/B/ST1/03513}
\address{$^{\spadesuit}$Instytut Matematyczny\\
Uniwersytet Wroc{\l}awski\\
Wroc{\l}aw\\
Poland}
\email{pkowa@math.uni.wroc.pl} \urladdr{http://www.math.uni.wroc.pl/\textasciitilde pkowa/ }
\author[S. RANDRIAMBOLOLONA]{Serge Randriambololona$^{\dagger}$}
\thanks{2010 \textit{Mathematics Subject Classification} 03C45, 12J25.}
\thanks{\textit{Key words and phrases}. Strongly minimal structures, valued fields.}
\address{$^{\dagger}$Galatasaray \"{U}n\.{\.i}vers\.{\.i}tes\.{\.i}\\
Istanbul\\
Turkey}
\email{serge.randriambololona@math.cnrs.fr}
\DeclareMathOperator{\acl}{acl}  
\DeclareMathOperator{\gl}{GL} \DeclareMathOperator{\aut}{Aut} 
\DeclareMathOperator{\stab}{Stab}
\DeclareMathOperator{\ch}{char}
\DeclareMathOperator{\ddf}{DF}\DeclareMathOperator{\dcf}{DCF}
\DeclareMathOperator{\acvf}{ACVF}\DeclareMathOperator{\acf}{ACF}\DeclareMathOperator{\rmm}{RM}\DeclareMathOperator{\dmm}{DM}
\newtheorem{theorem}{Theorem}[section]
\newtheorem{prop}[theorem]{Proposition}
\newtheorem{lemma}[theorem]{Lemma}
\newtheorem{conjecture}[theorem]{Conjecture}
\theoremstyle{definition}
\newtheorem{definition}[theorem]{Definition}
\newtheorem{remark}[theorem]{Remark}
\newtheorem{assumption}[theorem]{Assumption}
\begin{document}

\newcommand{\twoc}[3]{ {#1} \choose {{#2}|{#3}}}
\newcommand{\thrc}[4]{ {#1} \choose {{#2}|{#3}|{#4}}}
\newcommand{\Zz}{{\mathds{Z}}}
\newcommand{\Ff}{{\mathds{F}}}
\newcommand{\Cc}{{\mathds{C}}}
\newcommand{\Rr}{{\mathds{R}}}
\newcommand{\Nn}{{\mathds{N}}}
\newcommand{\Qq}{{\mathds{Q}}}
\newcommand{\Kk}{{\mathds{K}}}
\newcommand{\Pp}{{\mathds{P}}}
\newcommand{\ddd}{\mathrm{d}}
\newcommand{\Aa}{\mathds{A}}
\newcommand{\dlog}{\mathrm{ld}}
\newcommand{\ga}{\mathbb{G}_{\rm{a}}}
\newcommand{\gm}{\mathbb{G}_{\rm{m}}}
\newcommand{\gaf}{\widehat{\mathbb{G}}_{\rm{a}}}
\newcommand{\gmf}{\widehat{\mathbb{G}}_{\rm{m}}}
\newcommand{\gdf}{\mathfrak{g}-\ddf}
\newcommand{\gdcf}{\mathfrak{g}-\dcf}
\newcommand{\fdf}{F-\ddf}
\newcommand{\fdcf}{F-\dcf}

\maketitle
\begin{abstract}
We prove that if a strongly minimal non-locally modular reduct of an algebraically closed valued field of characteristic $0$ contains $+$, then this reduct is bi-interpretable with the underlying field.
\end{abstract}

\section{Introduction}\label{secintro}

In 1980's, Zilber posed a conjecture \cite{Zil1} asserting that if a strongly minimal structure is not locally modular, then it interprets a field. Zilber's conjecture was refuted by Hrushovski \cite{HR2}, however it holds for many interesting classes of structures. General feeling is that Zilber's conjecture should hold in a ``geometric context''. This feeling is confirmed by a theorem of Hrushovski and Zilber \cite{HZ} which says that Zilber's conjecture holds for strongly minimal \emph{Zariski geometries}.

During the problem session of the Pure Model Theory conference in Norwich (July 2005), Kobi Peterzil asked whether Zilber's conjecture holds for strongly minimal structures \emph{interpretable in o-minimal ones}. We will refer to this (still open) question as \emph{Peterzil's conjecture}. In this case it seems difficult to put a Zariski geometry structure on the strongly minimal structure, since the ambient o-minimal geometry is far from being Zariski.

In this paper, we consider a \emph{valued field version} of Peterzil's conjecture. We formulate below its direct translation to the valued field context. By $\acvf$ (resp. $\acvf_0$) we mean the theory of algebraically closed non-trivially valued fields considered in the language of rings with an extra unary relation symbol for the valuation ring (resp. of characteristic $0$, with no restrictions on the residue characteristic).
\begin{conjecture}[Valued field version of Peterzil's Conjecture]\label{conjecture} Let $M$ be a strongly minimal structure which is not locally modular and interpretable in an algebraically closed valued field. Then $M$ interprets a field.
\end{conjecture}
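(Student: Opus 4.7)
My plan is to reduce Conjecture \ref{conjecture} to the theorem announced in the abstract by a two-step strategy: first, pass from an abstract interpretable structure to a reduct of a one-dimensional algebraic group via the group configuration theorem; second, use the resulting group operation (in suitable coordinates) to place ourselves in the situation where the reduct contains $+$.

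First, since $M$ is interpretable in an algebraically closed valued field, I would invoke elimination of imaginaries in the geometric sorts of $\acvf$ (Haskell--Hrushovski--Macpherson) to realize $M$ as definable in these sorts. Because $M$ is strongly minimal and non-locally modular, the residue field sort and the value group sort can contribute only locally modular pieces (the value group is purely linearly ordered, so any structure induced on it via $M$ would be stable and linear, hence locally modular), so the non-modular content of $M$ must come from the valued field sort $K$. Thus, up to interdefinability after naming a few parameters, I would expect $M$ to be essentially a structure definable in $K$ itself. The first obstacle is to make this reduction clean: the geometric sorts are not just $K$, and one needs to rule out genuine non-modular contributions arising from the quotient sorts.

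Next, I would apply Hrushovski's group configuration theorem to the non-locally modular strongly minimal structure $M$ to produce an $M$-definable (in $M^{\eq}$) group $G$ acting transitively and almost faithfully on a one-dimensional set. Since $G$ is interpretable in $\acvf_0$, and strongly minimal, one should be able to identify $G$ with a one-dimensional algebraic group over some algebraically closed valued field $K$ of characteristic $0$; in characteristic $0$ the only such groups are $\ga$, $\gm$, and elliptic curves. After a definable change of coordinates (which may require passing to a cover or reinterpreting $M$ on an open subset), the reduct structure on $G$ becomes a strongly minimal reduct of $K$ that contains the group operation of $G$. For $\ga$ this is literally ``contains $+$'', and the theorem of the paper applies. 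For $\gm$ and elliptic curves, one would have to either extend the paper's main theorem to those groups, or use an algebraic manipulation (logarithms on a disk, or the formal group laws at a point) to convert the group law into additive form on a definable piece, exploiting that local modularity and non--local modularity are preserved.

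The main obstacle, in my view, is the very last step: translating the group operation on $\gm$ or on an elliptic curve into an additive one inside $M$ without enlarging the reduct. The paper's result is about reducts \emph{containing $+$}, and the natural group one extracts from a configuration in an elliptic curve is not additive. One option is to prove a general principle that any strongly minimal reduct of $\acvf_0$ on which a one-dimensional algebraic group acts definably is bi-interpretable with the field, by a formal-group or uniformization argument local to a point; a second option is to develop a direct analogue of the paper's main theorem for $\gm$ and for elliptic curves, presumably by the same strategy of using the valuation to tame the reduct. Either route requires genuinely new input beyond the stated theorem, which is why Conjecture \ref{conjecture} remains a conjecture rather than a corollary.
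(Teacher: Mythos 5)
You are attempting to prove something the paper explicitly labels a \emph{conjecture} and describes as a valued-field analogue of Peterzil's (still open) conjecture. There is no proof of Conjecture~\ref{conjecture} in the paper to compare against: the paper's actual result, Theorem~\ref{acvfthm}, is the special case in which the strongly minimal structure is a reduct of $\mathbf{K}$ with the same universe $K$ \emph{and} contains the additive group $(K,+)$. Your closing sentence, acknowledging that the last step ``requires genuinely new input beyond the stated theorem,'' is therefore the most accurate part of the proposal.

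That said, the proposed reduction has a fundamental gap before the last step, at the place where you write that you ``would apply Hrushovski's group configuration theorem to the non-locally modular strongly minimal structure $M$ to produce an $M$-definable group $G$.'' The group configuration theorem is a \emph{recognition} theorem: it recovers a group from a given group configuration. Non-local modularity of $M$ does not, by itself, supply such a configuration --- that is precisely Zilber's original conjecture, and it is false in general by Hrushovski's ab initio constructions, which produce non-locally modular strongly minimal sets with no infinite definable group at all. So there is nothing to feed into the group configuration theorem. What the paper actually does (Section~\ref{secex}) is \emph{construct} a field configuration inside $\mathcal{K}$ by hand, using the valuation topology, local analyticity, derivatives of definable curves, and Swiss cheese decomposition --- and it is for this construction that the hypotheses that the universe is $K$ and that $+$ is in the language are essential. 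Your proposal assumes away exactly the hard part.

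Two smaller points. First, even granting a configuration, the non-locally modular case yields a \emph{two}-dimensional group acting transitively on a one-dimensional set (a field configuration has ranks $(2,2,2,1,1,1)$); it does not yield a one-dimensional group, so the trichotomy $\ga/\gm/$elliptic curve is aimed at the wrong object. Second, your ``first obstacle'' --- isolating the non-modular content of $M$ inside the field sort and discarding the geometric quotient sorts --- is a genuine and unresolved difficulty in its own right, not something that follows from $\Gamma$ being an ordered group: the lattice sorts $S_n$, $T_n$ are not linear, and showing that a strongly minimal non-locally modular $M$ must (after parameters) live over $K$ rather than over these sorts requires an argument that does not yet exist. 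The honest conclusion, which you half-reach, is that Conjecture~\ref{conjecture} is not a corollary of Theorem~\ref{acvfthm}, and the obstruction is not merely at the elliptic-curve step but already at the step of getting any group from $M$.
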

\begin{remark}
A more general version of Conjecture \ref{conjecture} can be obtained by replacing ``algebraically closed valued field'' with ``$C$-minimal field'' or even  ``$C$-minimal structure'' (see \cite{HM1}). We will discuss such possible generalizations in Section \ref{seccmin}.
\end{remark}
If $\mathbf{M}$ is an o-minimal structure or a model of ACVF, then the $\acl_{\mathbf{M}}$-operator is a pregeometry giving $\mathbf{M}$ a notion of dimension $\dim_{\mathbf{M}}$ on tuples and definable sets. Let $\mathcal{M}$ be a strongly minimal structure interpretable in $\mathbf{M}$. In the case when the universe of $\mathcal{M}$ is definable in $\mathbf{M}$, it is natural to start attacking Peterzil's conjecture from the cases of $\mathcal{M}$ of small $\dim_{\mathbf{M}}(\mathcal{M})$.

In \cite{HaOPe}, Peterzil's conjecture was verified in the case of $\dim_{\mathbf{M}}(\mathcal{M})=1$ (showing that $\mathcal{M}$ is then locally modular). In an attempt to attack the case of $\dim_{\mathbf{M}}(\mathcal{M})=2$, Assaf Hasson and the first author (motivated by \cite{MaPi}) considered in \cite{HaKo} the case where the structure $\mathcal{M}$ expands $(\Cc,+)$ (or, more generally, the additive group of the algebraic closure of the underlying o-minimal field). By a general model-theoretic argument (see Proposition \ref{reduction}), in such a case Peterzil's conjecture reduces to the case of a strongly minimal structure of the form $(\Cc,+,X)$, where $X$ is an $\mathbf{M}$-definable subset of $\Cc\times \Cc$. It is shown in \cite{HaKo} (after quite a long argument) that if $X$ is a graph of a function $f$, then $f$ is an $\Rr$-linear conjugate of a $\Cc$-constructible function, which, in particular, verifies Peterzil's conjecture in this case.

In the valued field context, the situation seems to be easier than in the o-minimal one. The 1-dimensional valued field  case corresponds to the 2-dimensional o-minimal case (since the underlying valued field is already algebraically closed). In this paper we show the following.
\begin{theorem}\label{acvfthm}
Suppose $\mathbf{K}=(K,+,\cdot,\mathcal{O}_K)$ is an algebraically closed valued field of characteristic $0$ and $\mathcal{K}$ is a strongly minimal reduct of $\mathbf{K}$ containing $(K,+)$. If $\mathcal{K}$ is not locally modular, then $\mathcal{K}$ is bi-interpretable with the field $(K,+,\cdot)$.
\end{theorem}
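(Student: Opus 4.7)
The plan is a three-step strategy paralleling the o-minimal analysis of \cite{HaKo}, simplified by the fact that here the base field is already algebraically closed. First I would apply Proposition \ref{reduction} to reduce to the case $\mathcal{K}=(K,+,X)$ for a single ACVF-definable relation $X\subseteq K^2$. The non-local modularity of $\mathcal{K}$ then produces, via the usual analysis of strongly minimal rank-one geometries, a $\mathcal{K}$-definable almost faithful $2$-parameter family of plane curves of Morley rank $1$, with $X$ (or an additive variant of it) appearing as a generic curve of that family.

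The crucial step, and the main obstacle, is to prove that $X$ is definable in the pure field $(K,+,\cdot)$, i.e.\ Zariski-constructible. Every ACVF-definable subset of $K^2$ is, outside a lower-dimensional exceptional set, a Boolean combination of Zariski pieces with conditions of the form $v(f)\geq v(g)$ for polynomials $f,g$. If the valuation part of such a description of $X$ were genuinely necessary, then by taking appropriate $\mathcal{K}$-definable additive combinations of fibres of $X$ one should be able to produce a $\mathcal{K}$-definable subset of $K$ that is a proper Swiss cheese — hence neither finite nor cofinite — contradicting the strong minimality of $\mathcal{K}$. Identifying which additive combinations work, and tracking which valuation-theoretic data can survive those combinations, is the technical heart of the argument; the characteristic-zero hypothesis enters here by ruling out pathological additive subgroups of $K$ of prime-characteristic type.

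Once $X$ is known to be constructible, the direction that $\mathcal{K}$ is interpretable in $(K,+,\cdot)$ is automatic, since $\mathcal{K}$ is then already a reduct of the pure field. For the converse direction, a strongly minimal non-locally modular expansion of $(K,+)$ by a Zariski-constructible relation interprets the field multiplication: apply the Hrushovski group-configuration theorem to the family of plane curves obtained in the first step, and then use the classification of one-dimensional definable groups in $\acf$ (together with the fact that the additive group is already present) to recover the multiplication of $K$. A routine verification that the two interpretations compose to the identity up to $\emptyset$-definable equivalence then yields the desired bi-interpretability.
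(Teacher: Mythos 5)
Your plan parts ways with the paper's proof precisely at the step you call crucial, and that is where the gap is. You propose to prove first that $X$ is Zariski-constructible, by arguing that if the valuation data in the ACVF-description of $X$ were really needed, then additive combinations of $X$ (and of its translates/fibres) would manufacture a $\mathcal{K}$-definable subset of $K$ that is a proper Swiss cheese, violating strong minimality. No argument is given for that, and it is not at all clear that one exists: the reduct $\mathcal{K}$ only sees $(K,+)$ and $X$, so it cannot see the Zariski closure of $X$ nor the boundary of the valuation regions separating the algebraic pieces, and the unary sets one obtains by projecting $X$, $X_a\pm X_b$, $X_a\circ X_b$, etc.\ to a coordinate are typically cofinite in $K$ even when $X$ itself is badly non-constructible. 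There is no obvious mechanism that squeezes a valuation ball out of the additive calculus alone. In fact, constructibility of $X$ is a \emph{consequence} of the theorem (the proof ends by showing $\mathcal{K}=(K,+,\cdot)$ as structures), and the paper deliberately does not attempt it as an intermediate step.

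The paper's route is essentially the reverse of yours. After the standard reduction to $\mathcal{K}=(K,+,X)$ (your Step~1, which matches Proposition~\ref{reduction}), it replaces the constructibility question by local-analytic information that is available regardless of whether $X$ is constructible: using model completeness and K\"urschák's theorem to pass to a complete model, it decomposes $X$ as a finite union of valuation-open pieces of irreducible algebraic curves (Lemma~\ref{decomp1}), applies the non-Archimedean Implicit Function Theorem (Theorem~\ref{ift}) to make $X$ locally a graph of an analytic function, and defines a $\mathbf{K}$-definable derivative map $a\mapsto X'(a)$ (Lemma~\ref{deffunc}). The Leibniz-type calculus of these derivatives under $+$ and $\circ$ of translates (Lemma~\ref{leibnitz}), together with the Swiss-cheese decomposition showing the derivative image has interior (Theorem~\ref{swiss}), is then used to construct an explicit \emph{field} configuration in $\mathcal{K}$ (Theorem~\ref{thereisfield}). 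This last step is substantive: one needs a genuine field configuration, not merely a group configuration, and the $\acl_{\mathcal{K}}$-dependence relations are verified by a multiplicity-counting argument via Continuity of Roots (Theorem~\ref{ap}); your "apply the Hrushovski group-configuration theorem to the family of plane curves" glosses over exactly this work. Once a field $\mathcal{F}$ is interpreted in $\mathcal{K}$, the endgame is as you suggest but uses ACVF-specific inputs: Hrushovski's classification of $\mathbf{K}$-interpretable fields (Theorem~\ref{otpepi}) to identify $\mathcal{F}$ with $(K,+,\cdot)$ rather than the residue field, the description of $\mathbf{K}$-definable additive endomorphisms as scalar multiplications (Proposition~\ref{defend}), and the rigidity theorem that $(K,+,\cdot)$ has no proper $\mathbf{K}$-definable strongly minimal expansion (Theorem~\ref{pest}) to conclude $\mathcal{K}=(K,+,\cdot)$. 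In short: your reduction step is right, your constructibility step is an unfilled gap, and your field-interpretation step is roughly the correct endgame but both leans on the missing Step~2 and underestimates the analytic work needed to produce the field configuration.
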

\begin{remark}
\begin{enumerate}
\item Theorem \ref{acvfthm} extends \cite[Thm 2.1]{MaPi} from the case of $\acf_0$ to the case of $\acvf_0$.

\item The proof of \cite[Thm 2.1]{MaPi} goes through to give an Archimedean version of Theorem \ref{acvfthm} (see also \cite[Remark 7.5(2)]{HaKo}).
\end{enumerate}
\end{remark}

This paper is organized as follows. In Section 2, we collect classical results about valued fields which we need.  In Section 3, we present a proof of Theorem \ref{acvfthm}. In Section 4, we discuss possible generalizations of Theorem \ref{acvfthm} beyond the context of pure algebraically closed valued fields.

We would like to thank Assaf Hasson for his comments on an earlier version of this paper. We would also like to thank to the referee for her/his very careful reading of the paper and many valuable comments.

\section{Preliminaries on valued fields}\label{secnec}

In this section, we collect the classical results from non-Archimedean analysis and model theory of valued fields which will be used in Section \ref{secproof}. For reader's convenience, we will state these results in the simplest possible form (so the lowest generality) which we will need in the sequel. We fix $\mathbf{K}=(K,+,\cdot,\mathcal{O}_K)$, an algebraically closed valued field. We denote the corresponding (multiplicative) valuation by $|\cdot |$.

To prevent a possible confusion we would like to point out the following.
\begin{remark}\label{twoar}
There are two notions of ``Archimedean''.
\begin{enumerate}
\item An Archimedean (normed) field, e.g. a normed subfield of the field of complex numbers.
\item An Archimedean (ordered) group, e.g. an ordered subgroup of $(\Rr,+)$.
\end{enumerate}
In this paper we are concerned with non-Archimedean fields, but sometimes we discuss their connections with Archimedean fields. We will also need to work with valued fields having the Archimedean value group (see Section \ref{secstan})
\end{remark}
We need two $\mathbf{K}$-analytic results. For readers convenience we recall the necessary definitions (taken from \cite{abhyankar1964local}) below. We assume that here (until Theorem \ref{qe}) the \emph{value group} of $\mathbf{K}$ is Archimedean (see Remark \ref{twoar}).
\begin{definition}
We assume that $\mathbf K$ is complete. Suppose that $U$ is an open subset of $K^n$ for some $n$, $\phi$ is a map from $U$ to $K$ and $a\in U$.
\begin{itemize}
\item The map $\phi$ is said to be \emph{$\mathbf K$-analytic at $a$} if there is a formal power series $f$ in $n$ variables with coefficients in $K$, converging in an open neighbourhood $\Omega$ of $0$, such that $a+b\in  U$ and $\phi(a+b)=f(b)$ for each $b\in \Omega$.

\item The map $\phi$ is said to be \emph{$\mathbf K$-analytic on $U$} if it is $\mathbf K$-analytic at every point of $U$.

\item A set $X\subset K^n$ is said to be \emph{$\mathbf K$-analytic at $a$} if there is a neighbourhood $V$ of $a$ in $K^n$ and finitely many $\mathbf K$-analytic functions $\phi_1$, \dots , $\phi_l$ on $V$ such that
\[
X\cap V =\{ x\in V\ |\ \phi_1(x)=0,\ldots ,\phi_l(x)=0\}.
\]

\item A set $X\subset K^n$ is said to be \emph{locally $\mathbf K$-analytic} if it is $\mathbf K$-analytic at each of its points.
\end{itemize}
\end{definition}

\begin{theorem}[Implicit Function Theorem, page 84 of \cite{abhyankar1964local}]\label{ift}
We assume that $\mathbf{K}$ is complete. Suppose $U$ is an open subset of $K\times K$, $F:U\to K$ is a $\mathbf{K}$-analytic function and $z\in U$. If $\frac{\partial F}{\partial y}(z)\neq 0$, then there are open sets $U_x,U_y\subseteq K$ such that $z\in U_x\times U_y\subseteq U$ and there is a $\mathbf{K}$-analytic function $f:U_x\to U_y$ such that we have the following
$$\{v\in U_x\times U_y\ |\ F(v)=0\} = \{(a,f(a))\ |\ a\in U_x\}.$$
\end{theorem}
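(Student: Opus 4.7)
The plan is to reduce to a local problem at $z$ and then solve $F(x,y) = 0$ for $y$ as a $\mathbf{K}$-analytic function of $x$ via a non-Archimedean contraction mapping argument, using the completeness of $\mathbf{K}$ and the ultrametric inequality.

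After translating, I may assume $z = (0,0)$, so $F(0,0) = 0$. Writing $c := \partial F/\partial y(0,0) \neq 0$ and dividing $F$ by $c$ (which does not change the zero set), I may further assume $\partial F/\partial y(0,0) = 1$. By $\mathbf{K}$-analyticity, $F(x,y) = \sum_{i,j \geq 0} a_{ij} x^i y^j$ on some polydisc $\{|x|, |y| \leq r\}$ around the origin, with $a_{00} = 0$ and $a_{01} = 1$. Setting $G(x,y) := y - F(x,y)$, every monomial $a_{ij} x^i y^j$ appearing in $G$ has $i \geq 1$ or $j \geq 2$; in particular $G(0,0) = 0$, $\partial G/\partial y(0,0) = 0$, and the equation $F(x,y) = 0$ is equivalent to the fixed point equation $y = G(x,y)$.

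For each sufficiently small $x$, I would then show that $T_x : y \mapsto G(x,y)$ is a strict contraction on a small closed disc $\overline{D}(0,s) \subset K$. Since every monomial of $G$ of positive $y$-degree has total degree at least $2$, the ultrametric inequality together with the bound $|a_{ij}| \leq M r^{-(i+j)}$ yields $|T_x(y_1) - T_x(y_2)| \leq C \cdot s \cdot |y_1 - y_2|$ for some constant $C$ depending only on $M, r$, and taking $s$ small enough makes $C s < 1$. An analogous estimate shows $T_x(\overline{D}(0,s)) \subseteq \overline{D}(0,s)$. The non-Archimedean Banach fixed point theorem applied to the complete ultrametric space $\overline{D}(0,s)$ then yields a unique $f(x) \in \overline{D}(0,s)$ with $F(x,f(x)) = 0$.

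The remaining and main technical step is to verify that $f$ itself is $\mathbf{K}$-analytic at $0$. I would construct $f$ alternatively as a formal power series $f(x) = \sum_{n \geq 1} c_n x^n$ by matching coefficients in the identity $F(x,f(x)) = 0$: the order-$n$ equation takes the form $c_n + P_n = 0$, where $P_n$ is a universal polynomial expression in the earlier $c_k$ and in the $a_{ij}$ with $i+j \leq n$, so the $c_n$ are uniquely determined (the coefficient of $c_n$ being $a_{01} = 1$). A straightforward induction using the ultrametric bound on the $|a_{ij}|$ produces estimates $|c_n| \leq M' s_0^{-n}$ for some constants $M', s_0 > 0$, whence $f$ converges on a disc around $0$. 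Uniqueness of the contraction mapping fixed point forces this power series to coincide with the solution constructed above, giving the desired $\mathbf{K}$-analytic implicit function on a suitable $U_x \times U_y$.
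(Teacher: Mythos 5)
The paper does not prove Theorem~\ref{ift}; it is imported verbatim from Abhyankar's \emph{Local Analytic Geometry}, so there is no in-paper argument to compare against. Your contraction-mapping-plus-coefficient-matching route is a correct and self-contained proof in the non-Archimedean complete setting, and it is a legitimate alternative to the treatment in Abhyankar, who derives the implicit function theorem from his Weierstrass machinery (preparation/division) rather than from a fixed-point argument. Your approach buys elementarity (only completeness, the ultrametric inequality, and Cauchy estimates are used), at the cost of needing two separate arguments --- Banach fixed point for existence/uniqueness, power series for analyticity --- which you then reconcile via uniqueness.

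Two points deserve tightening. First, the inductive bound $|c_n|\leq M's_0^{-n}$ is less automatic than ``straightforward'' suggests: with the Cauchy estimates $|a_{ij}|\leq Mr^{-(i+j)}$ and $a_{01}=1$, a naive ansatz $|c_k|\leq A\rho^{-k}$ forces $A\geq M$ (from the pure-$x$ terms $a_{n0}$) and, from the $i=0,\ j\geq2$ terms, $A\leq r^{2}/M$; these can be incompatible. The clean way is to first rescale $x$ and $y$ so that $r=1$, hence $|a_{ij}|\leq M$ with $M\geq1$, and then prove by induction that $|c_n|\leq M^{2n-1}$; the inductive step reduces to the observation that every index pair $(i,j)$ with $j\geq1$ and $(i,j)\neq(0,1)$ satisfies $2i+j\geq2$. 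Second, you only establish analyticity of $f$ at the single point $0$, whereas the statement asserts $\mathbf K$-analyticity on all of $U_x$. This does follow --- a power series converging on an ultrametric disc can be re-expanded about any point of that disc, using that $|\binom{n}{k}|\leq1$ since integers have norm at most $1$ --- but it should be said explicitly, together with shrinking $U_x$ to lie inside the common disc where both the fixed-point solution and the convergent series are defined.
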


\begin{theorem}[Continuity of Roots]\label{ap}
We assume that $\mathbf{K}$ is complete. Let $U,U'\subseteq K$ be open subsets and $F:U\times U'\to K$ be a $\mathbf{K}$-analytic function. For any $t\in U'$, we denote the function $F(\cdot,t)$ by $f_t(\cdot)$. Suppose that there are elements $a\in U,b\in U'$ such that the function $f_b$ has a zero of multiplicity $d>0$ at $a$. Then there are open subsets $a\in U_a\subseteq U,\ b\in U_b\subseteq U'$ such that:
\begin{itemize}
\item $f_b^{-1}(0)\cap U_a=\{a\}$,

\item for any $t\in U_b$, the function $f_t$ has exactly $d$ zeroes (counting multiplicities) in $U_a$.
\end{itemize}
\end{theorem}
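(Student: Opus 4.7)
After translating, we may assume $a=0$ and $b=0$. Since $F$ is $\mathbf{K}$-analytic at $(0,0)$, there exist $r_0,s_0>0$ and a power series
\[
F(x,t)=\sum_{i,j\geq 0}a_{ij}x^{i}t^{j}
\]
converging on the closed polydisk $\{|x|\leq r_0\}\times\{|t|\leq s_0\}$. Setting $b_i(t):=\sum_{j\geq 0}a_{ij}t^{j}$, each $b_i$ is analytic in $t$ and $F(x,t)=\sum_{i}b_i(t)x^{i}$. By hypothesis $f_0$ vanishes to order $d$ at $0$, so $a_{i,0}=0$ for $i<d$ while $a_{d,0}\neq 0$. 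Using $|a_{i,0}|r_0^{i}\to 0$, shrink $r_0$ to some $r\in(0,r_0]$ satisfying
\[
|a_{d,0}|r^{d}\;>\;|a_{i,0}|r^{i}\quad\text{for every }i\neq d.
\]

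The central tool is the non-Archimedean Weierstrass preparation theorem on a disk: if $\phi(x)=\sum_{i}c_i x^{i}$ converges on $\{|x|\leq r\}$ and $|c_n|r^{n}$ attains its maximum uniquely at some index $n$, then $\phi$ factors as $\phi=P\cdot u$ with $P$ a monic polynomial in $x$ of degree $n$ and $u$ an analytic unit on the disk; in particular, since $K$ is algebraically closed, $\phi$ has exactly $n$ zeros (counted with multiplicity) in that disk. My plan is to apply this with $\phi=f_t$ and $n=d$.

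It remains to see that the strict-maximum condition survives for $t$ near $0$. The non-Archimedean Cauchy estimate yields $|a_{ij}|r^{i}s_0^{j}\leq M:=\sup_{\mathrm{polydisk}}|F|$, and hence the uniform bound $|b_i(t)-a_{i,0}|r^{i}\leq M|t|/s_0$ for every $i$ and every $|t|\leq s_0$ (maximizing at $j=1$ after $|t|\leq s_0$). Choosing $\delta>0$ so that $M\delta/s_0<|a_{d,0}|r^{d}$, the ultrametric triangle inequality gives, for all $|t|\leq\delta$, both $|b_d(t)|r^{d}=|a_{d,0}|r^{d}$ and $|b_i(t)|r^{i}<|a_{d,0}|r^{d}$ for every $i\neq d$. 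Applying the Weierstrass statement to $f_t$ produces exactly $d$ zeros in the (ultrametrically open) disk $U_a:=\{|x|\leq r\}$, while $f_b^{-1}(0)\cap U_a=\{0\}$ follows from writing $f_0(x)=x^{d}h(x)$ with $h(0)\neq 0$ and observing that $|h|$ is then constant equal to $|h(0)|$ on $U_a$ after a possible further shrink of $r$. Setting $U_b:=\{|t|\leq\delta\}$ completes the proof.

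The main obstacle is pinning down the non-Archimedean Weierstrass preparation theorem in the precise quantitative form invoked above, and verifying that the uniform Cauchy bound on the coefficients $b_i(t)-a_{i,0}$ truly holds for all $i$ simultaneously. Both are classical, but they are the only non-routine ingredients; once they are available, the rest is straightforward ultrametric bookkeeping.
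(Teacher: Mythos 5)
Your proof is correct and follows the same route the paper indicates: the paper's proof is merely a one-line citation to the non-Archimedean Weierstrass Preparation Theorem (Abhyankar (10.3)(2) and (11.3)), while you carry out the ultrametric bookkeeping that reduces the statement to it. The two ingredients you flag as non-routine --- the strict-maximum form of Weierstrass preparation on a closed disk and the uniform Cauchy bound $|b_i(t)-a_{i,0}|r^i\leq M|t|/s_0$ --- are indeed exactly what the cited reference supplies, and the rest of your argument (the choice of $r$, the choice of $\delta$, the isolation of the zero of $f_0$) checks out.
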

\begin{proof}
It can be proved as in the Archimedean case. One can either use Weierstrass Preparation Theorem (more precisely: \cite[(10.3)2)]{abhyankar1964local} and \cite[(11.3)]{abhyankar1964local}) or Argument Principle (discussed in the non-Archimedean case e.g. in \cite{Maa}).
\end{proof}

We also need an algebraic result about valued fields.
\begin{theorem}[K\"{u}rsch\'{a}k, Theorem on p. 142 of \cite{ribenboim1999theory}]\label{compacf}
The completion of an algebraically closed valued field is again algebraically closed.
\end{theorem}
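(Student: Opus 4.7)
The plan is to reduce to showing that every monic irreducible polynomial $P(X) \in \hat{K}[X]$ of degree $n \geq 1$ has a root in $\hat{K}$ (necessarily then $n=1$), where $\hat{K}$ denotes the completion of $K$. First I would extend the valuation from $\hat{K}$ to an algebraic closure $\overline{\hat{K}}$; because $\hat{K}$ is complete, such an extension exists and is unique, so $\hat{K}$ is henselian and every $\hat{K}$-automorphism of $\overline{\hat{K}}$ is an isometry. Let $\alpha \in \overline{\hat{K}}$ be a root of $P$, let $\alpha_2,\dots,\alpha_n$ be its remaining conjugates, and set $\delta := \min_{i\geq 2}|\alpha - \alpha_i|$, which is strictly positive since $P$ is separable (we are in characteristic $0$, or at any rate may assume separability).

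Next, using density of $K$ in $\hat{K}$, I would approximate the coefficients of $P$ by elements of $K$ to produce a sequence of monic polynomials $P_m \in K[X]$ of degree $n$ converging coefficient-wise to $P$. Since $K$ is algebraically closed, each $P_m$ splits completely over $K$. A continuity-of-roots statement then says that for $m$ sufficiently large there is a root $\beta_m \in K$ of $P_m$ with $|\alpha - \beta_m| < \delta$. One may extract this directly from Theorem \ref{ap} (parametrizing the polynomial $P_m$ by its coefficient tuple, viewed as an analytic function of both the variable and the parameter), or argue from scratch via resultants or Newton polygons.

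The final step is Krasner's lemma applied to $\alpha$ and $\beta_m$. Let $\tau$ be any $\hat{K}(\beta_m)$-automorphism of $\overline{\hat{K}}$. Then $\tau(\alpha)$ equals some $\alpha_j$, and by uniqueness of the valuation extension
\[
|\tau(\alpha) - \alpha| \leq \max\bigl(|\tau(\alpha) - \beta_m|,\, |\beta_m - \alpha|\bigr) = |\alpha - \beta_m| < \delta,
\]
using $|\tau(\alpha) - \beta_m| = |\tau(\alpha - \beta_m)| = |\alpha - \beta_m|$. Since $\delta$ was chosen as the minimal distance from $\alpha$ to any distinct conjugate, this forces $\tau(\alpha) = \alpha$, so $\alpha \in \hat{K}(\beta_m) = \hat{K}$, producing the desired root of $P$ in $\hat{K}$.

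I expect the continuity-of-roots step to be the main technical point, since one must locate a root of the approximating polynomial $P_m$ inside the prescribed Krasner radius of $\alpha$, rather than merely close to \emph{some} root of $P$. In the ultrametric setting this becomes clean once one invokes uniqueness of the extended valuation together with continuous dependence of Newton polygon slopes on the coefficients, and it is the only place in the argument where genuine analytic (as opposed to purely algebraic) input is required.
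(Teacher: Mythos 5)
The paper does not give a proof of this theorem; it simply cites it to Ribenboim's book, where the argument is the classical one via Krasner's lemma, which is exactly what you have reproduced. Your proof is correct: uniqueness of the extended valuation on $\overline{\hat{K}}$ (so that $\hat{K}$-automorphisms are isometries), density of $K$ in $\hat{K}$ and continuity of roots to find $\beta_m\in K$ within the Krasner radius $\delta$ of $\alpha$, and then Krasner's lemma to conclude $\alpha\in\hat{K}(\beta_m)=\hat{K}$. Two small tightenings are worth noting. First, the separability of $P$ requires no characteristic hypothesis: $K$ algebraically closed makes $K$ perfect, and perfectness passes to $\hat{K}$ because the Frobenius of $K$ extends continuously to a surjective endomorphism of $\hat{K}$ (if $a_n\to a$ in $\hat{K}$ with $a_n\in K$, the sequence $a_n^{1/p}\in K$ is Cauchy since $|a_n^{1/p}-a_m^{1/p}|^p=|a_n-a_m|$). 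Second, the continuity-of-roots step — which you flagged as the main technical point — is in fact the easiest part here, and invoking Theorem~\ref{ap} is heavier than needed: since $P_m$ splits over $K$ as $\prod_{i=1}^n(X-\beta_{m,i})$, one has $|P_m(\alpha)|=\prod_i|\alpha-\beta_{m,i}|$, and $|P_m(\alpha)|=|P_m(\alpha)-P(\alpha)|\to 0$ as the coefficients converge, so once $|P_m(\alpha)|<\delta^n$ some factor $|\alpha-\beta_{m,i}|$ must be strictly less than $\delta$, yielding the desired $\beta_m$ directly.
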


Finally, we need some model theory of algebraically closed valued fields (with an arbitrary value group). The first result is due to Robinson and the second to Holly.
\begin{theorem}[Model Completeness, Section 3.5 in \cite{robinson1956complete}]\label{qe}
Any extension of non-trivially valued algebraically closed valued fields is elementary.
\end{theorem}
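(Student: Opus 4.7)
The plan is to establish quantifier elimination for $\acvf$ in the valued-field language, from which model completeness is immediate. I work with a sufficiently saturated monster model $\mathbf{U}\models\acvf$ and, given an extension $\mathbf{K}\subseteq\mathbf{L}$ of algebraically closed non-trivially valued fields with $\mathbf{L}$ of small cardinality, I will build a $\mathbf{K}$-embedding $\mathbf{L}\hookrightarrow\mathbf{U}$ by transfinite induction along a fixed enumeration of $L$, adjoining one element at a time.

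At each stage I have a partial $\mathbf{K}$-embedding $\sigma\colon F\hookrightarrow\mathbf{U}$ defined on a small algebraically closed valued subfield $F\subseteq L$, and I wish to extend $\sigma$ to $F(a)^{\mathrm{alg}}$ for the next $a\in L\setminus F$. If $a$ is algebraic over $F$, then by the classical fact that the valuation on $F$ extends uniquely up to Galois action to its algebraic closure, I need only locate a root of $\sigma(\min(a/F))$ in $\mathbf{U}$ matching the valuation data of $a$ over $F$; saturation of $\mathbf{U}$ supplies such a root. If $a$ is transcendental over $F$, I split into three subcases according to whether the extension $F(a)/F$ is value-group growing ($|a|\notin|F^{\times}|$), residue-field growing (some rescaling of $a$ has residue transcendental over the residue field of $F$), or immediate. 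In the first two subcases the Gauss extension of the valuation to $F(a)$ is canonically determined by the prescribed value or residue, and saturation produces a witness $b\in\mathbf{U}$ with the required value or residue over $\sigma(F)$.

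The immediate case is the substantive one. Here I invoke Kaplansky's theory of pseudo-Cauchy sequences: the transcendental element $a$ is a pseudo-limit of some pseudo-Cauchy sequence $(a_n)_{n<\omega}\subseteq F$ without pseudo-limit in $F$, and the isomorphism type of the valued field $F(a)$ over $F$ is entirely determined by (the equivalence class of) such a sequence. Transporting via $\sigma$ yields a pseudo-Cauchy sequence $(\sigma(a_n))$ in $\mathbf{U}$; by saturation this sequence acquires a pseudo-limit $b\in\mathbf{U}$, and the map $a\mapsto b$ extends $\sigma$ in a valuation-preserving way.

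The main technical obstacle is precisely this immediate case, which rests on Kaplansky's classification of immediate extensions of valued fields via pseudo-Cauchy sequences together with the existence of pseudo-limits in sufficiently saturated models; the algebraic and value/residue-growing cases reduce to the standard theory of Gauss extensions and of uniqueness of valuations on algebraic extensions. Once the back-and-forth terminates, the resulting $\mathbf{K}$-embedding $\mathbf{L}\hookrightarrow\mathbf{U}$, combined with an analogous embedding on the other side, establishes that $\sigma$ is elementary, yielding both quantifier elimination and the desired model completeness.
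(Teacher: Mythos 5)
The paper does not prove this statement; it is quoted as a theorem of Robinson (1956) with a citation and no argument. Your sketch supplies a proof, and it follows what is now the standard route: embed an extension into a sufficiently saturated elementary extension of the base by transfinite back-and-forth, classifying each simple transcendental extension of an algebraically closed valued subfield as residual, valuational (ramified), or immediate, and handling the immediate case via Kaplansky's theory of pseudo-convergent sequences. This differs from Robinson's original, more syntactic argument, but it is correct in outline and is the cleaner modern proof; both establish the same model completeness statement, with yours making the role of saturation and of Kaplansky's uniqueness theorem explicit.

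A few details to tighten. Pseudo-convergent sequences must be allowed to have arbitrary limit-ordinal length (of the appropriate cofinality), not just $\omega$, since the relevant cuts in the value group of $F$ may have uncountable cofinality. Since you arrange for $F$ to stay algebraically closed at each stage, the ``$a$ algebraic over $F$'' branch is vacuous for the next $a\in L\setminus F$; the algebraic work arises instead in passing from $F(a)$ to $F(a)^{\mathrm{alg}}$, where one uses conjugacy of valuation extensions to algebraic extensions together with algebraic closedness and saturation of $\mathbf{U}$. It is also worth noting explicitly that, because $F$ is algebraically closed, every pseudo-convergent sequence in $F$ without a pseudo-limit in $F$ is automatically of transcendental type, so Kaplansky's uniqueness theorem for immediate transcendental extensions applies without his extra hypotheses. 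Finally, your back-and-forth runs between models and therefore delivers model completeness directly, which is exactly what the cited theorem asserts; genuine quantifier elimination would require starting from arbitrary valued subrings (and is sensitive to the choice of language, e.g. unary $\mathcal{O}$ versus a binary divisibility predicate), so the closing remark about obtaining ``both quantifier elimination and model completeness'' should be softened, though this does not affect the result needed by the paper.
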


\begin{theorem}[Swiss Cheese Decomposition, \cite{holly1995}]\label{swiss}
A $\mathbf{K}$-definable subset of $K$ is a union of \emph{Swiss cheeses}. In particular, an infinite $\mathbf{K}$-definable subset of $K$ has non-empty interior.
\end{theorem}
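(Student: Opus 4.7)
The plan is to invoke quantifier elimination for $\acvf$ in the ring language with the predicate for $\mathcal{O}_K$ (a classical strengthening of the model completeness statement of Theorem~\ref{qe}, due to Robinson) and to proceed by induction on the complexity of the defining quantifier-free formula. By QE, every $\mathbf{K}$-definable subset of $K$ is a finite Boolean combination of atomic sets of the form $\{x : p(x)=0\}$ and $\{x : v(p(x))\geq v(q(x))\}$ for $p,q\in K[x]$, so it suffices to show that the class of finite unions of Swiss cheeses contains all such atomic sets and is closed under intersection and complementation.

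For the atomic step, I would factor $p$ and $q$ into linear factors over the algebraically closed field $K$. The condition $v(p(x))\geq v(q(x))$ then becomes a $\Zz$-linear inequality among the values $v(x-a_i)$ for the finitely many roots $a_i$ of $p$ and $q$. The key ultrametric observation is that, as $x$ varies, the tuple $(v(x-a_i))_i$ is determined up to one free parameter by the leaf of the finite ultrametric tree spanned by $\{a_i\}$ in which $x$ sits: if $x$ and $a_i$ are separated by a branching at level $\gamma$, then $v(x-a_i)=\gamma$, while for roots $a_i$ sharing the leaf of $x$ the common value $v(x-a_i)$ ranges freely above the nearby branchings. A finite case analysis over the cells of this tree partition writes each atomic set as a finite union of Swiss cheeses. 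Boolean closure is then routine: De Morgan reduces everything to intersections and complements of balls, and in an ultrametric space any two balls are either nested or disjoint, while the complement of a ball $B$ equals $K\setminus B$, which is itself of Swiss-cheese form.

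The main obstacle is the combinatorial bookkeeping in the atomic step — tracking the position of $x$ relative to all the branch points of the root tree — but this is entirely finitary and explicit once the ultrametric tree picture is in place. For the \emph{in particular} clause, every ball (open or closed, of positive radius) is clopen in the valuation topology, as a direct consequence of the ultrametric inequality. A non-singleton Swiss cheese $B\setminus\bigcup B_i$ is therefore a non-empty clopen set, hence open and of non-empty interior. Thus if $D\subseteq K$ is infinite and definable, it decomposes as a finite union of Swiss cheeses, at least one of which is non-singleton; that constituent has non-empty interior, and so does $D$.
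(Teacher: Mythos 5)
This theorem is not proved in the paper; it is quoted as a known result and cited directly to Holly \cite{holly1995}, so there is no internal proof to compare against. Your sketch is, in outline, the standard argument that underlies Holly's theorem: quantifier elimination for $\acvf$, factorisation of the polynomials over the algebraically closed field, and a finite ultrametric bookkeeping of the branch points of $\{a_i\}$ to reduce the atomic valuative conditions to Swiss cheeses; the closure of finite unions of Swiss cheeses under Boolean operations and the ``in particular'' clause (balls of positive radius are clopen, so a non-singleton Swiss cheese is open) are routine exactly as you describe.

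One technical point should be tightened. $\acvf$ does \emph{not} eliminate quantifiers in the ring language augmented only by the \emph{unary} predicate for $\mathcal{O}_K$ --- in that language one has Robinson's model completeness (Theorem~\ref{qe}) but not QE. (For instance, with $a,b,b'$ algebraically independent and $0<v(a)=v(b)<v(b')$, the $\mathcal{L}$-substructures generated by $(a,b)$ and $(a,b')$ are isomorphic, as $\mathcal{O}$ meets each in the whole ring $\Zz[a,b]\cong\Zz[a,b']$, yet the two pairs satisfy different formulas such as $\exists z\,(yz=x\wedge\mathcal{O}(z))$.) The quantifier elimination you actually need --- and which the paper itself invokes in the proof of Lemma~\ref{decomp1}, citing \cite[Thm.~7.1(ii)]{HHM2} --- is in the language with a binary divisibility predicate for $v(x)\leq v(y)$. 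Your atomic conditions $v(p(x))\geq v(q(x))$ are precisely the divisibility atoms, so the argument goes through unchanged once the language is stated correctly.
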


\section{The proof}\label{secproof}

In this section we prove Theorem \ref{acvfthm}. We assume that $\ch(K)=0$. Let $\mathcal{K}$ be a strongly minimal reduct of $\mathbf{K}$ containing $(K,+)$. We assume that $\mathcal{K}$ is not locally modular.

\subsection{Standard reductions}\label{secstan}
In this subsection we will show that we can simplify both of the structures $\mathbf{K}$ and $\mathcal{K}$ without loss of generality. We note first quite an obvious fact.
\begin{lemma}\label{ee}
Suppose $\mathbf{L}$ is an algebraically closed valued field which is elementary equivalent to $\mathbf{K}$. Then we have the following.
\begin{enumerate} 
\item The corresponding strongly minimal reduct $\mathcal{L}$ (of $\mathbf{L}$) contains $(L,+)$ and is not locally modular.

\item If $\mathcal{L}$ is bi-interpretable with the field $(L,+,\cdot)$, then  $\mathcal{K}$ is bi-interpretable with the field $(K,+,\cdot)$.
\end{enumerate}
\end{lemma}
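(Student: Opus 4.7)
The reduct $\mathcal{K}$ is specified by a fixed family of formulas $\phi_i(\bar x)$ in the language of valued fields, one per relation symbol $R_i$ of $\mathcal{K}$ (any parameters can be absorbed as constants). The phrase ``corresponding strongly minimal reduct $\mathcal{L}$'' refers to the $L$-structure obtained by interpreting the same $\phi_i$ in $\mathbf{L}$. The plan is to reduce both (1) and (2) to first-order transfer along $\mathbf{K}\equiv\mathbf{L}$.

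The key preliminary observation is that $\mathcal{K}\equiv\mathcal{L}$: for a sentence $\sigma$ in the reduct language, substituting each $R_i$ by $\phi_i$ yields a sentence $\sigma^{\star}$ in the language of valued fields, and
\[
\mathcal{K}\models\sigma \iff \mathbf{K}\models\sigma^{\star} \iff \mathbf{L}\models\sigma^{\star} \iff \mathcal{L}\models\sigma,
\]
the middle equivalence being $\mathbf{K}\equiv\mathbf{L}$. Part (1) follows immediately. Containment of $(L,+)$ records that some fixed reduct formula $\alpha$ satisfies $\forall x,y,z\,(\alpha(x,y,z)\leftrightarrow x+y=z)$, a sentence in the valued-field language that transfers. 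Strong minimality is a schematic first-order property of the reduct theory (for each parametrized formula a uniform finite-or-cofinite bound), so transfers under $\mathcal{K}\equiv\mathcal{L}$. Local modularity of a strongly minimal theory depends only on the theory, so non-local-modularity transfers as well.

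Part (2) proceeds in the same style. Bi-interpretability is captured by first-order data: interpretation schemes $I,J$ together with formulas $f,g$ witnessing that $J\circ I$ and $I\circ J$ are $\emptyset$-definably isomorphic to the identities. That this data really defines a bi-interpretation is recorded by sentences in $\theo(\mathcal{L})$ (stating that $I$ interprets a model of the theory of $(L,+,\cdot)$ and that $f$ defines the required isomorphism) and in $\theo(L,+,\cdot)$ (symmetric). Both sets of sentences transfer: the first by $\mathcal{K}\equiv\mathcal{L}$, the second because $(K,+,\cdot)\equiv(L,+,\cdot)$ as algebraically closed fields of characteristic $0$. The same schemes $I,J,f,g$ therefore witness a bi-interpretation of $\mathcal{K}$ with the field $I(\mathcal{K})$, which is an algebraically closed field of characteristic $0$ of cardinality $|K|$ (from the elementary inequalities $|I(\mathcal{K})|\le|K|$ and $|\mathcal{K}|\le|I(\mathcal{K})|^m$), hence isomorphic to $(K,+,\cdot)$.

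The only step that is not a mechanical consequence of elementary equivalence is this final cardinality/isomorphism match; it is harmless in the intended application, where $\mathbf{L}$ will be chosen of the same cardinality as $\mathbf{K}$ (for instance, as its completion, which is still algebraically closed by Theorem~\ref{compacf}), so the isomorphism $I(\mathcal{K})\cong(K,+,\cdot)$ is automatic.
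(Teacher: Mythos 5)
Your proof of item (1) is essentially the paper's intended argument (the paper simply notes that local modularity transfers under elementary equivalence) and is correct: containing $(L,+)$, strong minimality, and non-local-modularity are all expressed by schemata of sentences of $\theo(\mathcal{K})$, so they transfer once one observes $\mathcal{K}\equiv\mathcal{L}$.

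For item (2), however, there is a genuine gap at the final step. Transfer does give you that $\mathcal{K}$ is bi-interpretable with the $\mathcal{K}$-interpretable field $F:=I(\mathcal{K})$, and your cardinality inequalities correctly show $|F|=|K|$. But \emph{``algebraically closed field of characteristic $0$ of cardinality $|K|$, hence isomorphic to $(K,+,\cdot)$''} fails when $|K|=\aleph_0$: two countable models of $\acf_0$ can have different transcendence degrees over $\Qq$ (e.g.\ $\overline{\Qq}$ versus $\overline{\Qq(t)}$) and thus be non-isomorphic, even though they have the same cardinality and the same theory. In the intended application $\mathbf{K}$ is the \emph{arbitrary} given model and $\mathbf{L}$ is the ``nice'' (complete, uncountable, Archimedean value group) model, so the countable case of $\mathbf{K}$ is exactly the one you need to cover. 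Your proposed fix does not repair this: the relevant cardinality is $|K|$, not $|L|$, and in any case the completion of a countable non-trivially valued field is uncountable (Baire category: a complete non-discrete metric space is uncountable), so ``$\mathbf{L}$ of the same cardinality as $\mathbf{K}$ via completion'' is not available.

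Two ways to close the gap. First, one can observe that $F$ is a field interpretable in the reduct $\mathcal{K}$ of $\mathbf{K}$, hence interpretable in $\mathbf{K}$ itself, and invoke Theorem~\ref{otpepi} (Hrushovski): $F$ is $\mathbf{K}$-definably isomorphic either to $(K,+,\cdot)$ or to the residue field; the second case is excluded because the bi-interpretation would make $K$ internal to the residue field in $\mathbf{K}$, contradicting orthogonality (\cite[Lemma 2.6.2]{HHM1}), as the paper itself argues in Step~1 of Theorem~\ref{mainthm}. Second, and closer in spirit to the paper's ``The item (2) is clear'': the main theorem in fact proves the \emph{stronger} conclusion $\mathcal{L}=(L,+,\cdot)$ as structures, i.e.\ mutual definability of the two sets of basic relations. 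That stronger conclusion is literally a collection of first-order sentences in the valued-field language (each defining formula $\phi_i$ of the reduct is equivalent to a field formula $\chi_i$, and the graph of $\cdot$ equals the set defined by the substitution $\psi^*$ of the $\phi_j$ into a reduct formula $\psi$), which transfers by $\mathbf{K}\equiv\mathbf{L}$ with no cardinality issues, immediately giving $\mathcal{K}=(K,+,\cdot)$ and a fortiori bi-interpretability.
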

\begin{proof}
For item $(1)$, it is enough to notice that local modularity is preserved under elementary equivalence. The item $(2)$ is clear.
\end{proof}

By Theorems \ref{compacf} -- \ref{qe} and Lemma \ref{ee}, we can assume that $\mathbf{K}$ is complete with the Archimedean value group, so Theorems \ref{ift} -- \ref{ap} can be applied. For the same reason, we can also assume that $K$ is uncountable, which we will need as a very mild form of saturation.

The following result is ``folklore''. We assume that the language is countable.
\begin{prop}\label{reduction}
Assume that $\mathcal{A}=(A,+,\ldots)$ is an uncountable strongly minimal group which is not locally modular. Then there is an $\mathcal{A}$-definable $X\subseteq A\times A$ such that the structure $(A,+,X)$ is not locally modular.
\end{prop}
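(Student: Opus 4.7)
The plan is to exhibit an $\mathcal{A}$-definable plane curve $X\subseteq A\times A$ whose additive stabilizer
\[
\stab(X) \;:=\; \{v\in A^{2}:(X+v)\triangle X\text{ is finite}\}
\]
has dimension zero (i.e.\ is finite). Once such an $X$ is available, the family of translates $\{X+v:v\in A^{2}\}$ is $\emptyset$-definable in $(A,+,X)$, parametrized canonically by $A^{2}/\stab(X)$ (of dimension two), and consists of strongly minimal plane curves pairwise inequivalent modulo finite symmetric difference. By the standard Hrushovski--Zilber characterization of non-local modularity in strongly minimal structures, the existence of such a two-dimensional canonical family forces $(A,+,X)$ not to be locally modular.

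To produce such an $X$, I would invoke the Hrushovski--Pillay characterization: since $\mathcal{A}$ is strongly minimal, uncountable, and not locally modular, there is an $\mathcal{A}$-definable two-dimensional canonical family $\{D_{t}:t\in T\}$ of strongly minimal plane curves in $A^{2}$, with $\dim T=2$ and the map $t\mapsto D_{t}$ finite-to-one modulo finite symmetric difference. For each $t$ the stabilizer $H_{t}:=\stab(D_{t})$ is an $\mathcal{A}$-definable subgroup of $A^{2}$, and since $D_{t}$ is strongly minimal, $\dim H_{t}\in\{0,1\}$. If some $t\in T$ satisfies $\dim H_{t}=0$, set $X:=D_{t}$ and we are done.

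Suppose instead $\dim H_{t}=1$ for every $t$. Then each $D_{t}$ is, modulo finite, a single coset of $H_{t}$. The map $t\mapsto H_{t}$ cannot be generically constant with value $H$, for then $\{D_{t}\}$ would be contained in the one-dimensional family of cosets $\{a+H:a\in A^{2}/H\}$, contradicting $\dim T=2$. So $\{H_{t}:t\in T\}$ is an infinite $\mathcal{A}$-definable family of one-dimensional subgroups of $A^{2}$. Generically such a subgroup is (after possibly passing to $\mathcal{A}^{\eq}$) the graph of an $\mathcal{A}$-definable endomorphism of $(A,+)$, so $\en_{\mathcal{A}}(A,+)$ contains an infinite definable subset. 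Closing under the ring operations and applying Macintyre--Cherlin style results on strongly minimal rings (an infinite definable ring acting faithfully on the strongly minimal abelian group $(A,+)$ is a division ring and forces $A$ to become a one-dimensional module, whence, after identifying $A$ with this division ring via a choice of nonzero element, an $\mathcal{A}$-definable field structure $(A,+,\cdot)$ appears). I then take $X$ to be the graph of a non-linear polynomial map, e.g.\ $X:=\{(a,a^{2}):a\in A\}$ when $\ch(A)\neq 2$ or $X:=\{(a,a^{3}):a\in A\}$ when $\ch(A)=2$; a routine polarization computation confirms that $\stab(X)$ is trivial, and the argument of the first paragraph applies.

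The main obstacle is the second case: extracting an $\mathcal{A}$-definable field structure from a one-dimensional family of one-dimensional subgroups invokes the nontrivial model theory of strongly minimal rings (Macintyre--Cherlin), together with care about finite correspondences vs.\ genuine endomorphisms. The first case, by contrast, is essentially immediate once the Hrushovski--Pillay family is in hand. A more direct construction of $X$ in the second case---producing a curve with finite stabilizer without first manufacturing the field---would be desirable but seems to require comparable input.
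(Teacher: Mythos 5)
Your first paragraph (finite additive stabilizer $\Rightarrow$ the family of translates witnesses non-local-modularity of $(A,+,X)$) is correct, and your Case 1 is essentially immediate. The difficulty is that your overall plan insists on producing a curve $X$ with \emph{finite} stabilizer, which is a strictly stronger demand than the proposition requires, and it is exactly this extra demand that forces your Case 2 and the machinery you try to marshal there. The paper never makes this demand. It takes the two-dimensional family $(X_c)_{c\in C}$ (from \cite[Prop.~2.6]{Pi}), fixes a single $c$ with $\dim_{\mathcal{A}}(c)=2$, sets $X=X_c$, and then argues \emph{directly} by contradiction: if $(A,+,X)$ were locally modular, then by \cite[Cor.~4.8]{Pi} the strongly minimal set $X$ would coincide, up to a finite set, with a coset $a+H$ of a subgroup $H$ that is $\acl(\emptyset)$-definable in $(A,+,X)$. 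That conclusion applies whether $H$ is finite \emph{or} one-dimensional, so no case split on $\dim\stab(X)$ is needed; the $\acl(\emptyset)$-definability of $H$ is what forces the family to have dimension $\leqslant 1$, contradicting $\dim_{\mathcal{A}}(c)=2$. In short, the paper's key input is \cite[Cor.~4.8]{Pi}, not the stabilizer-dimension dichotomy.

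Your Case 2, by contrast, has genuine gaps, several of which you flag yourself. A generic one-dimensional subgroup of $A^2$ is a graph of a \emph{quasi-}endomorphism (a finite-to-finite additive correspondence), not an actual endomorphism, and promoting an infinite definable family of such correspondences to an infinite \emph{definable ring} on which Macintyre--Cherlin can be invoked is not a routine step: quasi-endomorphisms are only a ring modulo commensurability, the quotient need not be a definable object in $\mathcal{A}$, and identifying $A$ with the resulting division ring (so that $\{(a,a^2)\}$ makes sense) requires the group-configuration machinery in full. You are, in effect, re-deriving a form of Hrushovski's group configuration theorem to handle a case the paper dispatches in one line. If you replace the stabilizer dichotomy by the coset characterization of plane curves in locally modular strongly minimal groups (\cite[Cor.~4.8]{Pi}), the argument collapses to a few lines and your Case 2 disappears entirely.
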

\begin{proof}
Take a two-dimensional definable family $(X_c)_{c\in C}$ of strongly minimal subsets of $A\times A$ which exists by Prop. 2.6 in Section 2 of \cite{Pi} (no saturation is needed here). Since $A$ is uncountable, there is $c\in C$ such that $\dim_{\mathcal{A}}(c)=2$. Let us define $X$ as $X_c$. If the structure $(A,+,X)$ is locally modular, then by Corollary 4.8 in Section 4 of \cite{Pi}, we have $X=a+H$ where $a\in A\times A$ and $H$ is an $\acl(\emptyset)$-definable (in the structure $(A,+,X)$, hence also in the structure $\mathcal{A}$) subgroup of $A\times A$. But then the family $(X_c)_{c\in C}$ is at most one-dimensional, a contradiction.
\end{proof}
Therefore, without loss of generality, we may assume the following.
\begin{assumption}\label{firstass}
The structure $\mathcal{K}$ coincides with $(K,+,X)$, where $X\subseteq K^2$ is a $\mathbf{K}$-definable set, and $K$ is uncountable.
\end{assumption}

\subsection{Decompositions}\label{secdec}
In this subsection, we fix $Y\subseteq K\times K$ which is $\mathbf{K}$-definable and infinite. The first lemma is well-known.
\begin{lemma}\label{decomp1}
We can write $Y$ as a finite union $Y=Y_1\cup \ldots \cup Y_r$ where each $Y_i$ is an open subset (in the valuation topology) of a Zariski closed subset of $\Aa^2(K)$.
\end{lemma}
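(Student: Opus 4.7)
The plan is to apply quantifier elimination for $\acvf$ to reduce $Y$ to disjunctive normal form, and then to exploit the fact that the valuation ring $\mathcal{O}_K$ is clopen in the valuation topology.

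First, I would invoke Robinson's quantifier elimination for $\acvf$ in the ring language augmented by the unary predicate $\mathcal{O}$ (a standard strengthening of the model completeness recorded in Theorem \ref{qe}). This shows that the $\mathbf{K}$-definable set $Y$ is already quantifier-free definable from atomic formulas of the form $p(x,y) = 0$ and $|p(x,y)| \leq 1$. Putting the defining formula in disjunctive normal form, I would write $Y = B_1 \cup \ldots \cup B_r$, where each $B_i$ is a finite intersection of sets of the four types $\{p = 0\}$, $\{q \neq 0\}$, $\{|r| \leq 1\}$, $\{|s| > 1\}$.

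Second, the ultrametric inequality makes $\mathcal{O}_K = \{|t| \leq 1\}$ clopen in $K$: if $|a| \leq 1$ then $\{|t-a| < 1\} \subseteq \mathcal{O}_K$, while if $|a| > 1$ then $\{|t-a| < |a|\} \subseteq K \setminus \mathcal{O}_K$. Since polynomials are continuous in the valuation topology, the preimages $\{|r| \leq 1\}$ and $\{|s| > 1\}$ are clopen in $K^2$. Zariski open sets $\{q \neq 0\}$ are of course also open in the valuation topology.

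Third, let $V_i$ be the Zariski closed subset of $\Aa^2(K)$ cut out by the polynomial equations $\{p_{i,j} = 0\}$ that appear in $B_i$. Then $B_i \subseteq V_i$, and with respect to the subspace valuation topology on $V_i$, the set $B_i$ is the intersection of the valuation-open sets $\{q_{i,l} \neq 0\}$ with the clopens $\{|r_{i,m}| \leq 1\}$ and $\{|s_{i,n}| > 1\}$; such an intersection is open in $V_i$. This yields the desired decomposition. The only non-routine ingredient is the passage to the quantifier-free normal form via QE for $\acvf$; the rest is continuity of polynomials together with the clopen-ness of balls, and I do not expect any serious obstacle.
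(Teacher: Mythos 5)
There is a genuine gap at the very first step: $\acvf$ does \emph{not} eliminate quantifiers in the ring language augmented only by the unary predicate $\mathcal{O}$, so the claimed normal form with atomic conditions of the two types $p(x,y)=0$ and $|p(x,y)|\leq 1$ is not available. Robinson's theorem is model completeness; quantifier elimination requires enriching the language by a binary divisibility relation such as $|x|\leq|y|$, or passing to a many-sorted language. A quick way to see the deficit: in residue characteristic $0$ one has $p(a)\in\mathcal{O}$ for every $p\in\Zz[x]$ and every $a\in\mathcal{O}$, so any two elements of $\mathcal{O}$ transcendental over $\Qq$ realise the same atomic type over $\emptyset$, even though the $\emptyset$-definable maximal ideal $\{x:|x|<1\}$ may contain one and not the other. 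So the ``standard strengthening'' you invoke is false.

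This matters precisely where your argument leans on the clopen-ness of $\mathcal{O}_K$. With the correct normal form (the one the paper uses, after \cite[Thm.~7.1(ii)]{HHM2}) the atomic conditions are binary comparisons $|F(x,y)|\boxtimes|G(x,y)|$ with $\boxtimes\in\{=,<\}$ and $F,G$ polynomials, and the conditions $\{|F|=|G|\}$ are not pullbacks of a clopen subset of $K$. While $\{|F|<|G|\}$ is always open, the set $\{|F|=|G|\}$ with $F,G$ both nonzero polynomials need not be open: for instance $\{|x|=|y|\}$ contains $(0,0)$ but no neighbourhood of it. The repair is to split $\{|F|=|G|\}$ as the union of the open set $\{|F|=|G|\}\cap\{FG\neq 0\}$ and the Zariski-closed set $\{F=0\}\cap\{G=0\}$, distribute the resulting unions over the intersections, and only then run your closing paragraph, collecting the Zariski-closed factors into $V_i$ and observing that the remaining factors cut out a relatively open subset of $V_i$. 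The overall strategy (quantifier elimination plus elementary valuation topology) is the same as the paper's, but the specific normal form you assert is wrong, and the clopen shortcut built on it does not survive the correction.
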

\begin{proof}
By quantifier elimination for $\mathbf{K}$ (see e.g. \cite[Thm. 7.1(ii)]{HHM2}), the set $Y$ is of the form
$$\bigcup_i\bigcap_j\{(a,b)\in K\times K\ :\ |F_{ij}(a,b)|\boxtimes_{ij} |G_{ij}(a,b)|\},$$
where $F_{ij},G_{ij}\in K[x,y]$ and $\boxtimes_{ij}\in \{=,<\}$. Without loss of generality, we can skip the $\bigcup_i$-symbol.
If $\boxtimes_{ij}$ is $<$, then the corresponding definable set is open in the valuation topology. If $\boxtimes_{ij}$ is $=$ and $F_{ij}G_{ij}\neq 0$, then the corresponding definable set is still open in the valuation topology. Finally, if $\boxtimes_{ij}$ is $=$ and $F_{ij}G_{ij}=0$, then the corresponding definable set is Zariski closed, so the lemma follows.
\end{proof}
\begin{assumption}\label{assume}
Let us assume that $Y$ is a \emph{boundary set}, i.e. it has empty interior in the valuation topology. By Lemma \ref{decomp1}, $Y$ is boundary if and only if the Zariski closure of $Y$ is a proper subset of $\Aa^2(K)$. We write now $Y$ as a finite disjoint union $Y=Y_0\cup \ldots \cup Y_r$, where $Y_0$ is finite and for each $i>0$, the Zariski closure of $Y_i$, denoted by $V_i$, is an irreducible algebraic curve, i.e. $V_i$ is the set of zeroes of an irreducible polynomial $F_i$. Finally, we assume that for $1\leqslant i<j\leqslant r$, we have $V_i\neq V_j$. Such a presentation of $Y$ is unique up to the choice of the finite set $Y_0$.
\end{assumption}
From now on, we ``privilege'' the first coordinate over the second one, i.e. we call a subset $Z\subseteq K\times K$ a \emph{graph of a function} if there is a (partial) function $f:K\to K$ such that
$$Z=\{(a,f(a))\ |\ a\in K\}.$$
We define the following ``bad locus'' set related to $Y$
$$Z_Y:=\left\{a\in Y\ |\ \bigvee_{i>0}\ \frac{\partial F_i}{\partial y}(a)=0\right\}\cup Y_0.$$
\begin{remark}
The bad locus set $Z_Y$ depends also on the chosen presentation of $Y$ as $Y=Y_0\cup \ldots Y_r$. By Assumption \ref{assume} (last sentence), $Z_Y$ depends only on $Y$ and the choice of $Y_0$. We prefer to write $Z_Y$ instead of $Z_{Y,Y_0}$.
\end{remark}
\begin{lemma}\label{anfunc}
For all $a\in Y\setminus Z_Y$, there is an open (in the valuation topology) subset $U\ni a$ contained in $Y\setminus Z_Y$ such that $Y\cap U$ is the graph of an analytic function.
\end{lemma}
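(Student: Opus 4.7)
The plan is to apply the Implicit Function Theorem (Theorem \ref{ift}) to the defining polynomial $F_i$ of the unique component $V_i$ of the Zariski closure of $Y$ that passes through $a$, and then shrink the resulting open box so that no other piece of $Y$ interferes.

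Since $a \in Y \setminus Z_Y$ and $Y_0 \subseteq Z_Y$, the point $a$ belongs to some $Y_i$ with $i \geqslant 1$. I would first exploit the remark after Assumption \ref{assume} that the decomposition is unique only up to the finite set $Y_0$, and enlarge $Y_0$ (still a finite set) so that it contains all pairwise intersections $V_j \cap V_k$ for $j \neq k$ in $\{1,\ldots,r\}$; these intersections are finite because distinct irreducible plane curves meet in finitely many points. With this choice of $Y_0$, the condition $a \notin Y_0$ forces $a$ to lie on the single component $V_i$ and on no other $V_j$.

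The hypothesis $a \notin Z_Y$ yields $\frac{\partial F_i}{\partial y}(a) \neq 0$, so Theorem \ref{ift} applied to $F_i$ at $a$ produces open sets $U_x, U_y \subseteq K$ with $a \in U_x \times U_y$ and a $\mathbf{K}$-analytic function $f : U_x \to U_y$ such that
$$V_i \cap (U_x \times U_y) = \{(x, f(x)) \mid x \in U_x\}.$$
I would then shrink $U := U_x \times U_y$ successively so that: (a) $U \cap Y_0 = \emptyset$, using that $Y_0$ is finite and $a \notin Y_0$; (b) $U \cap V_j = \emptyset$ for every $j \neq i$, using that $V_j$ is Zariski, hence valuation, closed and (by the previous paragraph) does not contain $a$; (c) $V_i \cap U \subseteq Y_i$, using that $Y_i$ is open in $V_i$ in the valuation topology and $a \in Y_i$; and (d) $U$ is disjoint from the Zariski closed set $\bigcup_{j > 0}\{\tfrac{\partial F_j}{\partial y} = 0\}$, ensuring $U \cap Z_Y = \emptyset$. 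After these finitely many shrinkings, $Y \cap U = Y_i \cap U = V_i \cap U$ is exactly the graph of $f|_{U_x}$, as required.

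The main obstacle is the bookkeeping step in (b): if the decomposition were chosen so that $a$ lay in some $V_j$ with $j \neq i$, then $Y_j$ could a priori accumulate at $a$ (with $a \in \overline{Y_j}\setminus Y_j$) and destroy the single-graph picture. The preliminary enlargement of $Y_0$ to absorb the finite set of pairwise intersections $V_j \cap V_k$ removes precisely this obstruction; the remaining steps are standard uses of the openness of $Y_i$ in $V_i$ and the Zariski-closedness of the relevant auxiliary sets in the valuation topology.
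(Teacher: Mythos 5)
Your proof is correct and follows the same basic route as the paper's: apply the Implicit Function Theorem to the defining polynomial $F_i$ of the component of the Zariski closure through $a$, using the local structure of $Y$ supplied by Lemma \ref{decomp1}. The paper's own argument is only two sentences and does not spell out any of the bookkeeping. The one place where you go genuinely further is the preliminary enlargement of $Y_0$ to absorb the (finite) pairwise intersections $V_j\cap V_k$. This is a real issue, not just tidiness: at a point $a\in V_i\cap V_j$ with both $\partial F_i/\partial y(a)\neq 0$ and $\partial F_j/\partial y(a)\neq 0$, the set $Y$ can locally be a union of two distinct analytic graphs, and applying the IFT to the product $F_iF_j$ fails since $\partial(F_iF_j)/\partial y$ vanishes at $a$. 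Your fix is the right one, and it is harmless for the rest of the argument because $Z_Y$ is only ever used modulo finite sets (cf.\ Lemma \ref{zxfinite} and the remark that $Z_Y$ depends on the choice of $Y_0$). One small point worth noting: step (d) of your shrinking uses that $\bigcup_{j>0}\{\partial F_j/\partial y=0\}$ is a proper Zariski closed set; if $\partial F_j/\partial y\equiv 0$ for some $j$ then this set is all of $K^2$, but in that degenerate case $Z_Y\supseteq Y$ and the lemma is vacuous, so nothing is lost. Also, you should take $U$ to be a product of balls inside $U_x\times U_y$ throughout the shrinking so that the conclusion ``$V_i\cap U$ is a graph over the first coordinate'' stays well-formed; this is routine. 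Overall, your version is more complete than the paper's and identifies a gap the paper glosses over.
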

\begin{proof}
By Lemma \ref{decomp1}, $Y$ is locally (in the valuation topology) Zariski closed, hence locally analytic.
Therefore Theorem \ref{ift} gives the result.
\end{proof}
Let $f$ denote the function given by Lemma \ref{anfunc}. For a given $a=(a_1,a_2)\in Y\setminus Z_Y$, we define the following:
$$Y'(a):=f'(a_1).$$
\begin{lemma}\label{deffunc}
The function
$$Y\setminus Z_Y\ni a\mapsto Y'(a)\in K$$
is definable in the structure $\mathbf{K}$.
\end{lemma}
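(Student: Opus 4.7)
The strategy is to use implicit differentiation on the defining polynomials $F_i$. Given $a = (a_1, a_2) \in Y \setminus Z_Y$, the disjointness in Assumption \ref{assume} places $a$ in a unique $Y_i$ with $1 \leqslant i \leqslant r$, and by definition of $Z_Y$ we have $\tfrac{\partial F_i}{\partial y}(a) \neq 0$. Lemma \ref{anfunc} supplies an open neighborhood $U = U_x \times U_y$ of $a$ on which $Y$ is the graph of a $\mathbf{K}$-analytic function $f\colon U_x \to U_y$ with $f(a_1) = a_2$ and $Y'(a) = f'(a_1)$.

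To tie $f$ to $F_i$, I would shrink $U$ so that it avoids the finite set $Y_0$ together with the (finitely many) intersection points of $V_i$ with the other $V_j$'s; this forces $Y \cap U \subseteq Y_i \subseteq V_i$, so $F_i(x, f(x)) = 0$ for every $x \in U_x$. Composing the power-series expansions of $F_i$ around $a$ with that of $f$ around $a_1$ and differentiating in $x$ yields
\[
\frac{\partial F_i}{\partial x}(a) + \frac{\partial F_i}{\partial y}(a) \cdot f'(a_1) = 0,
\]
and since $\tfrac{\partial F_i}{\partial y}(a) \neq 0$, I can solve for
\[
Y'(a) = -\, \frac{\partial F_i/\partial x(a)}{\partial F_i/\partial y(a)}.
\]

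The right-hand side is a rational expression in the coordinates of $a$ with coefficients in $K$, and each $Y_i$ is $\mathbf{K}$-definable from the fixed decomposition of $Y$ in Assumption \ref{assume}. The graph of the function $Y'$ is then cut out by the $\mathbf{K}$-formula
\[
\bigvee_{i=1}^{r} \left( a \in Y_i \;\wedge\; c \cdot \tfrac{\partial F_i}{\partial y}(a) + \tfrac{\partial F_i}{\partial x}(a) = 0 \right),
\]
which establishes definability in $\mathbf{K}$.

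The hard part will be controlling the local structure of $Y$ near $a$ so that $Y \cap U$ really is contained in a single $V_i$, rather than in a union of analytic branches coming from several curves $V_j$ passing through $a$. This is handled by exploiting the finiteness of $Y_0$ and of $V_i \cap V_j$ for $j \neq i$, both of which can be dodged by a suitably small valuation-open neighborhood of $a$. The chain rule step itself is routine from the formal power-series definition of $\mathbf{K}$-analyticity, since $F_i$ is a polynomial and $f$ is $\mathbf{K}$-analytic at $a_1$.
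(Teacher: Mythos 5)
Your proof is correct and matches the paper's: both define the graph of $a\mapsto Y'(a)$ via a finite disjunction expressing implicit differentiation along the curves $V_i$, and the paper's defining condition $\frac{\partial F_i}{\partial t}(x+t,y+mt)\big|_{t=0}=0$ unfolds, by the chain rule, to precisely your identity $\frac{\partial F_i}{\partial x}(a)+m\,\frac{\partial F_i}{\partial y}(a)=0$. The only cosmetic differences are that the paper uses $F_i(x,y)=0$ where you use $a\in Y_i$ as the branch selector, and the paper additionally records as a side remark the $\varepsilon\delta$-definability of derivatives of $\mathbf{K}$-definable functions.
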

\begin{proof}
If $f:K\to K$ is a function definable in $\mathbf K$, the derivative of $f$ is definable in $\bf K$ (uniformly in the parameters used to define $f$) by the usual $\varepsilon \delta$-formula:
\[
\ell=f'(x)\ \ \ \Leftrightarrow\ \ \ (\forall \varepsilon\neq 0) (\exists \delta\neq 0) (\forall t) (t\delta^{-1} \in \mathcal{O}_K \rightarrow f(x+t)-f(x)-\ell t \in t\varepsilon\mathcal{O}_K ).
\]
Consider the following set
\[
\left\{(x,y,m)\in K^3\ |\ (x,y)\in Y\setminus Y_0 \ \wedge\ \bigvee_i \left(F_i(x,y)=0\ \wedge \ \frac{\partial F_i}{\partial t} (x+t,y+mt)|_{t=0}=0\right) \right\}.
\]
This set is precisely the graph of the function $a\mapsto Y'(a)$.
\end{proof}

\subsection{Calculus of derivatives}\label{seccalc}
In this subsection, we assume that $Y\subseteq K\times K$ is $\mathcal{K}$-definable and strongly minimal (i.e. $\rmm_{\mathcal{K}}(Y)=1$ and $\dmm_{\mathcal{K}}(Y)=1$). Assumption \ref{assume} is satisfied, since if $Y$ is not a boundary set, then $\rmm(Y)=2$.
\begin{lemma}\label{needed}
Suppose there is $i$ such that $V_i$ is an affine line. Then we have the following.
\begin{enumerate}
\item The set $Y$ is an open subset of an affine line up to a finite set.

\item The structure $(K,+,Y)$ is locally modular.
\end{enumerate}
\end{lemma}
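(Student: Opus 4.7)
The plan is to exploit the strong minimality of $Y$ in $\mathcal K$ together with translation along the direction subgroup of the affine line $V_1$, and then to recognize the resulting structure as (essentially) a module.

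For~(1): let $L_0 \subset K^2$ be the $1$-dimensional algebraic subgroup such that $V_1$ is a coset of $L_0$. By Lemma~\ref{decomp1}, $Y_1$ is an open subset of $V_1$ in the valuation topology, and since $V_1$ is $L_0$-invariant, for every $v$ in a suitable small neighbourhood $U$ of $0$ in $L_0$ the intersection $Y_1 \cap (Y_1+v)$ contains an open arc of $V_1$ (non-Archimedean balls are invariant under sufficiently small translations), hence is infinite. Strong minimality of $Y$ then forces $Y \cap (Y+v)$ to be cofinite in $Y$ for every $v \in U$, so $Y$ and $Y+v$ share the same Zariski closure and translation by $v$ permutes $\{V_1, \dots, V_r\}$. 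Since translations preserve line directions, any $V_i$ not parallel to $V_1$ would allow $V_i + v \in \{V_1, \dots, V_r\}$ for only finitely many $v \in L_0$ (two distinct $1$-dimensional subgroups of $K^2$ meet in $\{0\}$), so the infinite set $U$ forces every $V_i$ to be a line parallel to $V_1$.

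Assume $r \geq 2$ toward contradiction, and write $V_i = V_1 + c_i$ for distinct $c_i \in K^2/L_0 \cong K$ with $c_1 = 0$. Since $K$ has characteristic $0$, the torsion-free group $K^2/L_0$ has no finite orbits under non-zero translations, so translation by $c_2$ cannot stabilise $\{c_1, \dots, c_r\}$; hence there is an index $k$ with $c_k - c_2 \notin \{c_1, \dots, c_r\}$. I would then pick $v$ in the coset $L_0 + c_2$ (so $V_1 + v = V_2$) whose $L_0$-part is chosen, using the non-emptiness and openness of $T_i := \pi_1(Y_i)$, so as to shift $T_1$ into an open overlap with $T_2$; this makes $Y_2 \cap (Y_1 + v)$ an infinite open arc of $V_2$. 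On the other hand, $V_j + v = V_k$ would require $c_j + c_2 = c_k$, which fails by the choice of $k$, and parallel distinct lines are disjoint, so $Y_k \cap (Y + v) \subseteq Y_0 + v$ is finite; the entire infinite $Y_k$ then lies in $Y \setminus (Y \cap (Y+v))$. Thus $Y \cap (Y+v)$ is infinite but not cofinite in $Y$, contradicting strong minimality. Hence $r = 1$ and $Y$ is an open subset of the affine line $V_1$ up to the finite set $Y_0$. The subcase where $V_1$ is vertical is handled by swapping the roles of the two coordinates throughout.

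For~(2): strong minimality of $\mathcal K$ forces $\pi_1(Y)$ to be cofinite in $K$, so by~(1), $Y$ is up to finite of the form $\{(x, \alpha x + \beta) : x \in T\}$ for a cofinite $T \subseteq K$ (the vertical subcase is similar and amounts to adjoining a constant to $(K,+)$). Then $Y - Y$ coincides with $L_0 = \{(s, \alpha s) : s \in K\}$ up to a finite set (as $T - T = K$), so $\mathcal K$ defines the multiplication-by-$\alpha$ endomorphism of $K$. Consequently $(K, +, Y)$ is interdefinable with $K$ viewed as a $\Zz[\alpha]$-module together with the named constant $\beta$; since modules are $1$-based and $1$-basedness is preserved under interdefinability, $(K, +, Y)$ is locally modular. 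The main obstacle is the coset-choice construction at the end of~(1): one must pick a single $v$ that simultaneously makes $Y \cap (Y+v)$ infinite (by overlapping two components' translates via the valuation-open Swiss cheeses $T_i$ and the $L_0$-shift) while leaving some $V_k$ outside the image of translation-by-$v$ among $\{V_1, \dots, V_r\}$ (guaranteed by the torsion-freeness of $K^2/L_0$).
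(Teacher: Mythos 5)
Your proof is correct, but it follows a genuinely different route from the paper's. The paper first reduces (without loss of generality, by moving $Y$ to pass through $(0,0)$) to the case where $Y$ is a \emph{subgroup} of $K^2$: it observes that an open subgroup $B$ of $K^2$ satisfies $B\cap Y = B\cap V_1$, so $B\cap V_1$ is an infinite subgroup contained in $\stab(Y)$; strong minimality then forces $Y$ to be a coset of $\stab(Y)$, hence (after translation) a subgroup. The argument that $r=1$ is then carried out inside the group $K^2$, by passing to the torsion-free quotient $K^2/V_1$ and noting that the image of $Y$ there is a finite subgroup, hence trivial. You, by contrast, avoid the stabilizer machinery entirely and work with translates $Y+v$ directly: small translates along the direction $L_0$ of $V_1$ already meet $Y$ in an infinite set, so strong minimality forces $Y\cap(Y+v)$ to be cofinite and hence forces translation by $v$ to permute $\{V_1,\ldots,V_r\}$; from this you deduce that all $V_i$ are parallel, and you then use torsion-freeness of $K^2/L_0$ to produce a single translate $v\in L_0+c_2$ for which $Y\cap(Y+v)$ is infinite yet misses almost all of some component $Y_k$, contradicting strong minimality. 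Both arguments use $\mathrm{char}\,K=0$ in the same place (torsion-freeness of the quotient $K^2/L_0$, respectively $K^2/V_1$). Your approach is more elementary (no appeal to stabilizers of generic types), while the paper's is shorter once the stabilizer technology from \cite{Pi} is available. For part (2) the two proofs essentially coincide: $Y$ is, up to a finite set, the graph of an affine map on a cofinite domain, so $(K,+,Y)$ is definable (with parameters) in a locally modular module/vector-space structure and therefore locally modular.

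One small wording issue: after constructing $v$, you write that ``the entire infinite $Y_k$'' lies outside $Y\cap(Y+v)$; what you actually show is that $Y_k\cap(Y+v)\subseteq Y_0+v$ is finite, so $Y_k\setminus(Y+v)$ is cofinite in $Y_k$ and hence infinite. That suffices for the contradiction, but the phrasing should be adjusted. A second minor point worth making explicit: you invoke that $Y\cap(Y+v)$ cofinite in $Y$ implies the Zariski closures of $Y$ and $Y+v$ agree; this requires the observation (which does follow from strong minimality of the translate $Y+v$) that the intersection is also cofinite in $Y+v$, not just in $Y$.
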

\begin{proof}
By Assumption \ref{assume}, the set $Y$ can be written as $Y=Y_0\cup \ldots \cup Y_r$ where $Y_0$ is finite and for $i>0$, the set $Y_i$ is open (in the valuation topology) inside the irreducible algebraic curve $V_i$. We can assume that $Y_0=\emptyset$ and that $V_1$ is an affine line. Without loss of generality, we can also assume that $(0,0)\in Y_1$, hence $V_1$ is a linear subspace of $K^2$. Since open subgroups of $K^2$ form a neighborhood basis for a point $(0,0)$, there is a subgroup $B$ such that
$$B\cap Y=B\cap V_1.$$
By our assumptions, $B\cap V_1$ is a subgroup of $K^2$. Then $B\cap V_1$ is a subgroup of the \emph{stabilizer} of (the $\mathcal{K}$-generic type of) $Y$, which we denote by $\stab(Y)$. (For the notion of a stabilizer which is used in this proof and its properties, the reader is referred to \cite[Sec 1.6]{Pi}.)
In particular, $\stab(Y)$ is infinite. Since $Y$ is strongly minimal, it coincides up to a finite set with a coset of $\stab(Y)$. Without loss of generality, $Y$ is a coset of $\stab(Y)$. Since $(0,0)\in Y$, we get that $Y$($=\stab(Y)$) is a subgroup of $K^2$.

For the proof of $(1)$, it is enough to show that $r=1$. Take $i\in \{1,\ldots,r\}$. Let $a\in Y_i$ and $B'\ni a$ be a coset of an open subgroup such that
$$Y\cap B'=V_i\cap B'.$$
Since $Y$ is a subgroup of $K^2$ and $B'$ is a coset of a subgroup of $K^2$, we get
$$Y\cap B'=a+Y',$$
where $Y'$ is an open subgroup of $Y$. Since $Y'$ and $B\cap Y$ are both open subgroups of $Y$, the set $Y'':=Y'\cap B$ is an open subgroup of $Y$. Because  $Y''$ is an infinite subset of $V_1$, we get that $a+Y''$ is an infinite subset of $V_i$. Then two irreducible curves $a+V_1$ and $V_i$ coincide, because they have infinite intersection. Therefore, the image of $Y$ in the quotient vector space $K^2/V_1$ has $r$ elements. But this quotient is also a subgroup of a torsion free (as a vector space over a field of characteristic $0$) group, hence $r=1$.

For the proof of $(2)$, note that the projection on one of the coordinate axis is a definable one-to-one map on $Y$. By strong minimality of $\mathcal{K}$, the image of this projection is cofinite. Hence the structure $(K,+,Y)$ is definable in the locally modular structure $(K,+,\cdot_{\lambda})_{\lambda\in K}$, therefore it is locally modular itself (see \cite{Wa3}).
\end{proof}
\begin{remark}\label{comingfromx}
We will apply Lemmas \ref{decomp1} -- \ref{needed} for $Y$ being $X$ or a curve ``coming from $X$'', i.e. a curve obtained by applying the operations $+,-,\circ$ described before Lemma \ref{leibnitz} to the additive translates of $X$.
\end{remark}

\begin{lemma}\label{zxfinite}
The set $Z_X$ is finite.
\end{lemma}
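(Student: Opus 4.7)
My approach is to decompose $Z_X$ into its constituent pieces and show each is finite. By Assumption \ref{assume}, the set $X_0$ is finite, so only the locus where some $\partial F_i/\partial y$ vanishes remains to be handled. Since the sets $X_1,\dots,X_r$ partition $X\setminus X_0$ and for a point $a\in X_i$ the relevant defining polynomial is $F_i$, one has
\[
Z_X \setminus X_0 \;\subseteq\; \bigcup_{i=1}^r B_i, \qquad B_i := V_i \cap \left\{\tfrac{\partial F_i}{\partial y}=0\right\}.
\]
The whole task then reduces to showing that each $B_i$ is a finite set.

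Fix $i$. The set $B_i$ is the intersection of the irreducible algebraic curve $V_i=V(F_i)$ with the algebraic set cut out by $\tfrac{\partial F_i}{\partial y}$. By irreducibility this intersection is finite unless $\tfrac{\partial F_i}{\partial y}$ vanishes on all of $V_i$, which (by the Nullstellensatz applied to the irreducible $F_i$) is equivalent to $F_i$ dividing $\tfrac{\partial F_i}{\partial y}$ in $K[x,y]$. A comparison of $y$-degrees shows that this can happen only when $\tfrac{\partial F_i}{\partial y}$ is the zero polynomial; and in characteristic zero this in turn forces $F_i \in K[x]$. Since $F_i$ is irreducible over the algebraically closed field $K$, it must then be linear of the form $x-c$, so that $V_i$ is the vertical affine line $\{x=c\}$.

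At this point I invoke Lemma \ref{needed}(2): the presence of an affine line among the $V_i$ would make $(K,+,X)=\mathcal{K}$ locally modular, contradicting our standing assumption. Hence no $V_i$ is a vertical line, each $B_i$ is finite, and therefore $Z_X$ is finite. The only delicate point in this argument is tracking how the two hypotheses collaborate: characteristic zero is what rules out the ``inseparable'' scenario $\tfrac{\partial F_i}{\partial y}=0$ while $F_i$ still genuinely depends on $y$ (a pathology like $F_i=y^p-g(x)$), whereas non-local-modularity of $\mathcal{K}$ (via Lemma \ref{needed}) is what eliminates the remaining honestly linear case. Neither hypothesis can be dispensed with, but together they suffice.
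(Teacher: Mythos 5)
Your argument follows the paper's proof exactly, which is given in a single sentence: if $Z_X$ is infinite then some $F_i$ must be linear, and Lemma \ref{needed} then contradicts the non-local-modularity of $\mathcal{K}$. You have correctly supplied the omitted details — infinite intersection of the irreducible curve $V_i$ with $\{\partial F_i/\partial y=0\}$ forces $F_i\mid \partial F_i/\partial y$, a $y$-degree count forces $\partial F_i/\partial y=0$, characteristic zero then forces $F_i\in K[x]$, and irreducibility over the algebraically closed $K$ forces $F_i$ to be a degree-one polynomial — and your remark about the division of labor between the two hypotheses is accurate.

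One small bookkeeping caveat, which applies equally to the paper's own one-line proof: your inclusion $Z_X\setminus X_0\subseteq\bigcup_i B_i$ tacitly reads the definition of $Z_Y$ as ``for $a\in Y_i$, test $\partial F_i/\partial y(a)$.'' As literally written in the paper, $Z_Y$ disjoins over \emph{all} $i>0$ regardless of which $Y_i$ contains $a$, so it also picks up cross-terms of the form $X_j\cap\{\partial F_i/\partial y=0\}$ with $i\neq j$; such a set $V_j\cap\{\partial F_i/\partial y=0\}$ being infinite gives $F_j\mid\partial F_i/\partial y$, which does not by itself force any $F_k$ to be linear. The restricted reading you use is clearly the one the lemma and its downstream applications actually require, so your reconstruction is faithful, but it is worth being aware that the written definition of $Z_Y$ is formally broader than what the argument handles.
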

\begin{proof}
If $Z_X$ is infinite, then some $F_i$ (for $Y=X)$ is linear and by Lemma \ref{needed}, $\mathcal{K}$ is locally modular, a contradiction.
\end{proof}
By Lemma \ref{zxfinite}, without loss of generality, we can assume that $Z_X=\emptyset$. We also assume that $(0,0)\in X$.
\\
\\
As in \cite[Def. 2.9]{MaPi}, for any $V,W\subseteq K\times K$ we define the following:
\begin{itemize}
\item $V+W:=\{(x,y_1+y_2)\ |\ (x,y_1)\in V,(x,y_2)\in W\}$;

\item $-V:=\{(x,-y)\ |\ (x,y)\in V\}$;

\item $V\circ W=\{(t_1,t_2)\in K\times K\ |\ (\exists t\in K)((t_1,t)\in W\ \wedge\ (t,t_2)\in V)\}$;

\item for $a\in V$, $V_a:=\{z-a\ |\ z\in V\}$.
\end{itemize}
As in \cite[2.10]{MaPi}, we obtain the following lemma.
\begin{lemma}\label{leibnitz}
Let $V,W\subseteq K\times K$ be $\mathbf{K}$-definable. Then we have the following.
\begin{enumerate}
\item If $V$ an $W$ satisfy Assumption \ref{assume}, then $V+W$ and $V\circ W$ satisfy Assumption \ref{assume} as well. Moreover, if $Z_V=\emptyset=Z_W$, then $Z_{V+W}=\emptyset=Z_{V\circ W}$.


\item For any $a,b\in V\setminus Z_V$ we have
$$(V_a+V_b)'(0)=V'(a)+V'(b),\ \ \ (V_a\circ V_b)'(0)=V'(a)\cdot V'(b).$$
\end{enumerate}
\end{lemma}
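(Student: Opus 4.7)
The plan is to reduce Part (1) to a local analytic graph description, supplied by Lemmas \ref{decomp1} and \ref{anfunc}, and to reduce Part (2) to the one-variable chain rule and additivity of derivatives applied to the resulting local analytic functions.

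For Part (1), decompose $V = V_0 \cup V_1 \cup \ldots \cup V_r$ and $W = W_0 \cup W_1 \cup \ldots \cup W_s$ as prescribed by Assumption \ref{assume}. Since
\[
V+W \;=\; \bigcup_{i,j} (V_i + W_j), \qquad V\circ W \;=\; \bigcup_{i,j} (V_i \circ W_j),
\]
it suffices (absorbing contributions of the finite pieces $V_0, W_0$ into the eventual finite ``bad'' set) to treat a single pair $(V_i,W_j)$ with $i,j\geqslant 1$. Pick any point $(x_0, y_1+y_2) \in V_i + W_j$ with $(x_0,y_1)\in V_i$ and $(x_0,y_2)\in W_j$; using $Z_V=Z_W=\emptyset$ and Lemma \ref{anfunc}, there exist $\mathbf{K}$-analytic branches $\phi,\psi$ near $x_0$ whose graphs coincide locally with $V_i$ and $W_j$ respectively, and $\phi,\psi$ are algebraic over $K(x)$. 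Then $V_i+W_j$ agrees locally with the graph of $\phi+\psi$ and $V_i\circ W_j$ agrees locally with the graph of $\phi\circ\psi$; both compositions are again analytic and algebraic over $K(x)$. Their Zariski closures are therefore irreducible algebraic curves proper in $\Aa^2(K)$, which gives Assumption \ref{assume} for $V+W$ and $V\circ W$. The fact that these local representations are graphs over the first coordinate is, via Theorem \ref{ift} read in reverse, equivalent to the non-vanishing of $\partial H/\partial y$ on the relevant open piece (where $H$ denotes the defining polynomial of the Zariski closure). This yields $Z_{V+W} = \emptyset = Z_{V\circ W}$.

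For Part (2), fix $a, b \in V \setminus Z_V$ and let $f, g$ be the local $\mathbf{K}$-analytic branches from Lemma \ref{anfunc}, so $f(a_1)=a_2$, $f'(a_1)=V'(a)$, $g(b_1)=b_2$, $g'(b_1)=V'(b)$. Setting
\[
F(x) := f(x+a_1) - a_2, \qquad G(x) := g(x+b_1) - b_2,
\]
these are analytic, vanish at $0$, satisfy $F'(0)=V'(a)$, $G'(0)=V'(b)$, and locally represent $V_a$ and $V_b$ as graphs through the origin. Then $V_a + V_b$ is locally the graph of $F+G$, giving $(V_a+V_b)'(0) = F'(0) + G'(0) = V'(a) + V'(b)$. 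For composition, $G(0)=0$ implies $V_a\circ V_b$ is locally the graph of $F\circ G$ near $0$, and the chain rule gives $(V_a\circ V_b)'(0) = F'(G(0))\cdot G'(0) = F'(0)\cdot G'(0) = V'(a)\cdot V'(b)$.

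The main obstacle is the bookkeeping in Part (1): one must organize the (finitely many) local graphs of $\phi+\psi$ and $\phi\circ\psi$, arising from the different pairs $(V_i, W_j)$ and the distinct analytic branches within each pair, into a \emph{single} presentation satisfying Assumption \ref{assume}, i.e.\ with pairwise distinct irreducible Zariski closures and $\partial H/\partial y$ nowhere vanishing on the associated open piece. Strong minimality plays no role here -- the argument relies only on algebraicity of analytic branches and on Theorem \ref{ift} -- but this reorganization has to be checked carefully to get the ``moreover'' clause stating $Z_{V+W}=\emptyset=Z_{V\circ W}$.
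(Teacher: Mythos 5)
The paper gives no proof of this lemma; it simply points to \cite[2.10]{MaPi}, where the analogous statement is proved over $\Cc$ using classical complex-analytic tools. Your proposal is in effect a non-Archimedean transcription of that proof, so it follows the approach the authors intend, and Part (2) is correct and complete as written (the local graphs through the origin, additivity, and the chain rule are exactly what is needed).

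Two places in Part (1) deserve more than the gloss you give them, and in both cases you already sense the gap. First, ``Theorem~\ref{ift} read in reverse'' is not literally the Implicit Function Theorem; you must argue that if the locus $\{H=0\}$ near a point $a$ coincides with the graph of an analytic branch $\chi$ \emph{and} $H$ is irreducible, then $\chi$ is a simple root of $H(a_1,\cdot)$, so $\partial H/\partial y(a)\neq 0$. This follows from Weierstrass preparation (or Puiseux/Newton polygon): if $\chi$ were a multiple root then either $H$ would factor through $(y-\chi)^2$, contradicting irreducibility, or a Galois-conjugate Puiseux branch would merge with $\chi$ at $a$, contradicting that the curve is locally a single analytic graph. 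This is standard in characteristic $0$, but it needs to be said — it is the same phenomenon that Claim 2 in Theorem~\ref{thereisfield} depends on. Second, the bookkeeping: you should justify that only finitely many irreducible curves arise (the Zariski closure of $V+W$, resp.\ $V\circ W$, is a $\mathbf{K}$-definable, hence constructible, set of dimension at most one, so a finite union of curves and points), and that the finite piece $(V+W)_0$ may be taken empty (because local analyticity gives each point of $V+W$ an open neighbourhood inside some irreducible curve). Once these two points are made explicit, your argument closes the ``moreover'' clause cleanly, and the overall plan matches what \cite[2.10]{MaPi} would give after translation to the non-Archimedean setting via Theorems~\ref{ift}--\ref{ap}.
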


\subsection{Field configuration}\label{secfieldcon}
For the notion of \emph{group configuration} and how a type-definable transitive action of a type-definable group on a type-definable set gives a group configuration, the reader is advised to consult \cite[Section 5.4]{Pi}. The following theorem of Hrushovski may be obtained as a combination of \cite[Theorem 5.4.5]{Pi} and the theorem saying that any type-definable group in an $\omega$-stable theory is definable \cite[Corollary 5.19]{Po1}.
\begin{theorem}\label{groupconf}
In a model of an $\omega$-stable theory, each group configuration comes from an interpretable transitive action of an interpretable group on an interpretable set.
\end{theorem}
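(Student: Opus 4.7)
The plan is to follow exactly the hint given in the statement: produce the desired objects in two steps, first obtaining a type-definable action via the general stable group configuration theorem, then upgrading from type-definable to interpretable using $\omega$-stability.

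First, I apply \cite[Theorem 5.4.5]{Pi}. This is the Hrushovski group configuration theorem in the setting of stable theories, and it takes any group configuration (six points with the prescribed independence/algebraicity pattern) and produces a type-definable group $G$ equipped with a type-definable transitive action on a type-definable set $S$, such that the six canonical points attached to the action are interalgebraic with the six points of the original configuration. Since $\omega$-stable theories are in particular stable, this step applies without modification, and delivers the whole configuration-action correspondence up to the type-definable category.

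Second, I upgrade each of $G$, $S$ and the action $G\times S\to S$ from type-definable to interpretable. For the group itself this is precisely \cite[Corollary 5.19]{Po1}: in an $\omega$-stable theory, any type-definable group is $\bigwedge$-equivalent to a definable one, and in fact definable in $M^{\eq}$. To handle $S$ and the action, fix a point $s_0\in S$; its stabilizer $H=\stab_G(s_0)$ is a type-definable subgroup of $G$, and the descending chain condition on definable subgroups (which holds in any $\omega$-stable theory and which is the engine of Poizat's corollary) forces $H$ to be interpretable as well. The homogeneous space $S$ is then in canonical bijection with the coset space $G/H$, which lives in $M^{\eq}$, and the action of $G$ on $S$ becomes left multiplication of $G$ on $G/H$, manifestly interpretable.

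Main obstacle. There is essentially nothing to prove: the statement is a packaging of Pillay's and Poizat's results, and the only delicate point is the transition from type-definability to definability. That passage relies crucially on the DCC on definable subgroups available under $\omega$-stability; without this hypothesis one would only get $\bigvee$-definable or type-definable objects, and the conclusion as stated would fail. Once this DCC is in hand, the interpretability of $G$, $H$, $S=G/H$ and the action are all simultaneous consequences, and the theorem follows.
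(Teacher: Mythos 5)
Your proof is correct and follows exactly the same two-step strategy the paper uses: apply Pillay's group configuration theorem \cite[Theorem 5.4.5]{Pi} to obtain a type-definable transitive action, then invoke Poizat's result \cite[Corollary 5.19]{Po1} that type-definable groups in $\omega$-stable theories are definable. You additionally spell out the stabilizer/coset-space argument needed to upgrade the acted-upon set and the action itself, a detail the paper leaves implicit, but the underlying argument is identical.
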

In this subsection the terms ``dimension'' and ``Morley rank'' means the same. Note that if there is a definable field in an $\omega$-stable structure, then we have a definable transitive action of a two-dimensional group on a one-dimensional set (the action by the group of affine transformations on the line). The following theorem of Hrushovski (see \cite[Theorem 3.27]{Po1}) is an inverse statement.
\begin{theorem}\label{gpactionfield}
In a model of an $\omega$-stable theory, if there is a definable transitive action of a two-dimensional interpretable group on a one-dimensional interpretable set, then there is an interpretable field.
\end{theorem}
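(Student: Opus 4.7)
The plan is to combine the transitivity of the action with dimension counting to obtain generic 2-transitivity, then use this to set up Hrushovski's field configuration, whose interpretation (in the $\omega$-stable setting) produces the desired field. Throughout, "dimension'' means Morley rank, and we make free use of the fact that any type-definable group in an $\omega$-stable theory is definable (\cite[Corollary 5.19]{Po1}).

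First I would reduce to a faithful action. The kernel $N$ of the action of $G$ on $X$ is the interpretable subgroup $\bigcap_{a\in X}\stab_G(a)$, so it is definable. Replacing $G$ by $G/N$, I may assume $G\leq \aut(X)$, the action is still transitive, and the dimension relation $\dim G=\dim(G\cdot a)+\dim\stab_G(a)$ for generic $a$ becomes $2=1+\dim\stab_G(a)$, so $\dim\stab_G(a)=1$ for generic $a$. (The degenerate sub-case where the faithful quotient drops to dimension $1$, giving a regular action of a one-dimensional group, does not produce a field directly; but since the hypothesis is that $G$ really is $2$-dimensional as an interpretable group acting faithfully, this sub-case should be ruled out or absorbed in the statement.)

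Next I would establish generic $2$-transitivity. For a generic pair $(a,b)$ of independent elements of $X$, the orbit $\stab_G(a)\cdot b$ has dimension $\leq\dim\stab_G(a)=1$, and genericity of $b$ over $a$ forces equality, so this orbit is cofinite in $X$. Consequently the action of $G$ on the set of pairs of distinct points is generically transitive, and moreover $\stab_G(a,b)$ is zero-dimensional (finite) for generic independent $a,b$. Combined with transitivity on pairs, this means the action is \emph{generically sharply $2$-transitive}.

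With generic sharp $2$-transitivity in hand, I would set up a field configuration. Fix independent generic $a_1,a_2,a_3 \in X$ and pick, for each pair $i\neq j$, a (generically unique) element $g_{ij}\in G$ with $g_{ij}(a_i)=a_j$; the rank relations among $\{a_1,a_2,a_3,g_{12},g_{23},g_{13}\}$ produce a configuration of Hrushovski-field type, from which Hrushovski's field configuration theorem yields an interpretable field. Alternatively, and more concretely, one fixes two generic reference points $0_X,1_X\in X$, and for each generic $a$ defines the unique $m_a\in G$ with $m_a(0_X)=0_X,\ m_a(1_X)=a$, defining multiplication $a\odot b:=m_a(b)$; an analogous recipe using the stabilizer of $1_X$ alone gives addition, and the field axioms then hold generically because the $g_{ij}$ satisfy the composition identities dictated by the group structure on $G$.

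The main obstacle is not the existence of the configuration but the verification that the resulting binary operations are compatible, i.e.\ that the $2$-dimensional group coming out of Hrushovski's configuration theorem really splits as an affine group of a $1$-dimensional field and not, say, as a general $2$-dimensional algebraic group acting on a curve. This is exactly where the sharpness of the generic $2$-transitivity (finiteness of $\stab_G(a,b)$) is crucial: it forces the stabilizer of a generic point, a $1$-dimensional group acting faithfully and transitively on a cofinite subset of $X$, to be abelian and self-centralizing in $G$, which in Hrushovski's argument is what pins down the field law rather than merely a group law.
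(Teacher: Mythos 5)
The paper itself does not prove this theorem: it is quoted as Hrushovski's result with a citation to Poizat's \emph{Stable Groups} (Theorem 3.27), where faithfulness of the action is part of the hypothesis. Your proposal tries to supply a proof, and two of its steps do not work as written. First, the claim that ``genericity of $b$ over $a$ forces'' the orbit $\stab_G(a)\cdot b$ to be $1$-dimensional is exactly the nontrivial point; nothing you have established rules out that $\stab_G(a)$ acts on $X$ with generically finite orbits. In that case $\stab_G(a,b)$ would have finite index in $\stab_G(a)$, hence rank $1$, and one must then argue (using the chain condition on definable subgroups together with transitivity) that its connected component lies in the kernel of the action, contradicting faithfulness; that argument is missing. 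Second, the ``degenerate sub-case'' cannot be waved away. If the kernel $N$ has rank $1$ the statement is false: take $X$ a locally modular strongly minimal group (e.g.\ a $\Qq$-vector space) and let $G=X\times X$ act on $X$ by $(u,v)\cdot x=x+u$. This is a definable transitive action of a rank-$2$ interpretable group on a rank-$1$ set, yet no infinite field is interpretable. So faithfulness is a genuine hypothesis of the theorem and must either be assumed or verified in the intended applications.

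The field-configuration setup is also incorrect. The elements $g_{ij}$ chosen subject only to $g_{ij}(a_i)=a_j$ are \emph{not} generically unique: the set of such $g$ is a coset of $\stab_G(a_i)$, which has rank $1$. Generic sharp $2$-transitivity pins down $g$ from \emph{two} point conditions, not one, so independent generic choices of the $g_{ij}$ will not satisfy $g_{13}=g_{23}g_{12}$ and the configuration will not close up. The configuration one actually extracts from a definable transitive action is $(g,\,h,\,hg,\,x,\,g\cdot x,\,hg\cdot x)$ with $g,h\in G$ and $x\in X$ mutually generic, so there are no $g_{ij}$'s to choose at all. Your proposed recipe for addition is likewise off: in the affine model, $\stab(1_X)$ is a conjugate of the multiplicative group, and the additive structure corresponds to the fixed-point-free elements (the translations), not to any point stabilizer. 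Given the difficulty of getting these details right, it is much more efficient (and is what the paper does) to quote Poizat's Theorem 3.27 directly rather than re-derive it.
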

By \emph{field configuration} we mean a group configuration coming from an action as above, i.e. a group configuration $\{g_1,g_2,g_3,x_1,x_2,x_3\}$ such that each $g_i$ has Morley rank two and each $x_j$ has Morley rank one. Using Theorem \ref{groupconf} and Theorem \ref{gpactionfield}, we immediately get the following.
\begin{theorem}\label{fieldconf}
If there is a field configuration in an $\omega$-stable structure $M$, then $M$ interprets a field.
\end{theorem}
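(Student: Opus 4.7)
The plan is to apply Theorems \ref{groupconf} and \ref{gpactionfield} in succession, with only a brief Morley rank bookkeeping step in between. Starting with a field configuration $\{g_1,g_2,g_3,x_1,x_2,x_3\}$ in the $\omega$-stable structure $M$, I would first invoke Theorem \ref{groupconf} to obtain an interpretable transitive action of an interpretable group $G$ on an interpretable set $S$, whose canonical group configuration is (over the appropriate base) interalgebraic with the one we started from. In particular, the role of the $g_i$'s is played by generic elements of $G$ and the role of the $x_j$'s by generic elements of $S$.

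The next step is a dimension check. Because interalgebraic tuples carry equal Morley rank, the hypothesis that each $g_i$ has $\rmm=2$ transfers to $\rmm(G)=2$, and the hypothesis that each $x_j$ has $\rmm=1$ transfers to $\rmm(S)=1$. Thus the action produced by Theorem \ref{groupconf} is precisely a transitive action of a two-dimensional interpretable group on a one-dimensional interpretable set.

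Applying Theorem \ref{gpactionfield} to this action then immediately yields an interpretable field in $M$, as required. The main subtlety, to the extent that there is one, lies in the first step: one needs Theorem \ref{groupconf} to deliver not just some interpretable group action but one whose attached configuration is interalgebraic with the given one, so that Morley ranks really do transfer. This is built into the standard formulation of the group configuration theorem cited from \cite{Pi}, so the argument reduces to a mechanical assembly of the two quoted results.
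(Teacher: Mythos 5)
Your argument is correct and is essentially identical to the paper's (unwritten, one-line) proof: apply Theorem~\ref{groupconf} to obtain an interpretable transitive action, observe that interalgebraicity of the recovered configuration with the given one forces $\rmm(G)=2$ and $\rmm(S)=1$, and then invoke Theorem~\ref{gpactionfield}. The paper compresses this to the single sentence ``Using Theorem~\ref{groupconf} and Theorem~\ref{gpactionfield}, we immediately get the following,'' leaving the Morley rank bookkeeping implicit, which you have simply spelled out.
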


\subsection{Interpretability of a field}\label{secex}
In this subsection, we find a $\mathcal{K}$-definable field. The proof follows the lines of the proof of \cite[Thm 7.3]{HaKo}.
\begin{lemma}\label{infiniteimage}
The image of the function
$$X\ni a\mapsto X'(a)\in K$$
is an infinite subset of $K$.
\end{lemma}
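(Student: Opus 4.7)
The plan is to argue by contradiction: I will assume that $X'$ takes only finitely many values on $X$, and deduce that some $V_i$ must be an affine line, at which point Lemma \ref{needed}(2) gives that $\mathcal{K}=(K,+,X)$ is locally modular, contradicting our standing hypothesis.

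First I would apply Lemma \ref{anfunc}, which is available since $Z_X=\emptyset$: around any chosen point of $X$ it yields an open box $U_1\times U_2\subseteq K^2$ such that $X\cap (U_1\times U_2)$ is the graph of a $\mathbf{K}$-analytic function $f:U_1\to U_2$, and along this graph $X'$ coincides with $f'\circ \mathrm{pr}_1$. Under the contradictory assumption, $f'$ takes only finitely many values on the (infinite) open set $U_1$, so by pigeonhole the level set $S_c:=\{x\in U_1\ :\ f'(x)=c\}$ is infinite for some $c\in K$. This set is $\mathbf{K}$-definable, so by Theorem \ref{swiss} it has non-empty interior and therefore contains an open ball $B\subseteq U_1$. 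Expanding $f$ around any $x_0\in B$ as a convergent power series $\sum_{n\geq 0} a_n(x-x_0)^n$, the identity $f'|_B\equiv c$ gives $a_1=c$ and $na_n=0$ for all $n\geq 2$; since $\ch(K)=0$, this forces $a_n=0$ for $n\geq 2$, so $f(x)=cx+d$ on $B$ for some $d\in K$. Hence $X$ contains the infinite set $\{(x,cx+d)\ :\ x\in B\}$, which is contained in the affine line $L:=\{(x,cx+d)\ :\ x\in K\}$. Decomposing $X=Y_0\cup Y_1\cup\cdots\cup Y_r$ as in Assumption \ref{assume} (with $Y_0$ finite), some $Y_i$ with $i\geq 1$ must meet $L$ in infinitely many points, forcing the irreducible algebraic curves $V_i$ and $L$ to coincide. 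Thus one of the $V_i$ is an affine line, and Lemma \ref{needed}(2) closes the argument.

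The only mildly technical step is the passage from local constancy of $f'$ to affineness of $f$ on an open ball, which is precisely where the hypothesis $\ch(K)=0$ is used (to invert the integers $n\geq 2$ when killing higher Taylor coefficients). Everything else is a clean combination of the local analytic structure from Lemma \ref{anfunc}, the Swiss Cheese Decomposition (Theorem \ref{swiss}), and the line/modularity dichotomy of Lemma \ref{needed}.
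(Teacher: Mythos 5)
Your proof is correct, but it takes a genuinely different route from the paper's one-line argument. The paper argues algebraically: it observes that the derivative function is the restriction of a rational function on the irreducible curve $V_1$, so if it takes finitely many values on an infinite subset it must be constant on $V_1$, and a plane curve with constant slope in characteristic zero is a line; then Lemma~\ref{needed} applies. You instead argue analytically: you pass to a local chart from Lemma~\ref{anfunc}, use pigeonhole plus the Swiss Cheese decomposition (Theorem~\ref{swiss}) to get an open ball on which $f'$ is constant, and then kill the higher Taylor coefficients via the characteristic-zero assumption to see that $f$ is locally affine, so that $X$ meets an affine line in an infinite set and some $V_i$ must equal that line. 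Both routes isolate the same ultimate obstruction (a non-line irreducible curve cannot have derivative with finite image) and both invoke characteristic zero at an analogous point (constant slope forces a line). Your argument is somewhat longer and leans on Swiss Cheese and the local analytic structure, where the paper's leans only on the fact that a non-constant rational function on an irreducible curve has finite fibers; but the conclusion via Lemma~\ref{needed} is identical and the proof is sound.
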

\begin{proof}
Let $X=X_1\cup \ldots \cup X_r$ be as in Lemma \ref{decomp1} and $V_i$ be the Zariski closure of $X_i$. If the image of the derivative function is finite, then on an infinite subset of (e.g.) $V_1$, the derivative function takes finitely many values. This happens only when $V_1$ is an affine line, contradicting Lemma \ref{needed}.
\end{proof}
\begin{theorem}\label{thereisfield}
There is a $\mathcal{K}$-interpretable field.
\end{theorem}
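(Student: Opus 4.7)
The plan is to construct a field configuration inside the $\omega$-stable strongly minimal structure $\mathcal{K}$ and apply Theorem~\ref{fieldconf}. Following the template of \cite[Thm 7.3]{HaKo} (itself modeled on \cite[Thm 2.1]{MaPi}), the derivative-calculus of Subsection~\ref{seccalc} supplies the arithmetic needed to identify a $2$-dimensional group acting on a $1$-dimensional set.

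First, I would normalize. By Lemma~\ref{zxfinite} we may assume $Z_X = \emptyset$, by Lemma~\ref{needed} no irreducible component of the Zariski closure of $X$ is a line (else $\mathcal{K}$ would be locally modular), and by a translation that $(0,0) \in X$. Then $X$ is, locally in the valuation topology, the graph of a $\mathbf{K}$-analytic function of nonzero derivative (Lemma~\ref{anfunc}), and the map $a \mapsto X'(a)$ is $\mathbf{K}$-definable with infinite image (Lemmas~\ref{deffunc} and~\ref{infiniteimage}). The essential input is then Lemma~\ref{leibnitz}(2): on the set of all $\mathbf{K}$-definable curves obtained from additive translates of $X$ by iterated $+$ and $\circ$, the slope at $(0,0)$ converts the $\mathcal{K}$-definable operations $V + W$ and $V \circ W$ into addition and multiplication of elements of $K$, while Lemma~\ref{leibnitz}(1) ensures the class of curves to which this calculus applies is closed under these operations.

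Next I would assemble the field configuration. Let $\mathcal{F} = \{X + (u,v) : (u,v) \in K^2\}$; this is a $\mathcal{K}$-definable $2$-dimensional family of strongly minimal curves, each of Morley rank and degree one in $\mathcal{K}$. Using $\mathcal{F}$ and the operations $+, \circ$, one produces, over a sufficiently generic base, three $\mathcal{K}$-interpretable $2$-dimensional objects $g_1, g_2, g_3$ (pairs consisting of a curve in the family together with a distinguished base point, encoding the germ of a map $K \to K$) and three $\mathcal{K}$-interpretable $1$-dimensional objects $x_1, x_2, x_3$ (points in $K$), such that pairwise $\{g_i, g_j\}$ are independent, the generic slopes $\alpha, \beta, \gamma$ of $g_1, g_2, g_3$ at the base point satisfy $\gamma = \alpha\beta$, and the incidences $x_2 \in g_1 \cdot x_1$, $x_3 \in g_2 \cdot x_1 = g_3 \cdot x_2$ are witnessed by the appropriate composition/translation identities of curves. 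The slope calculus guarantees that the "closure" relation $g_3 = g_1 \circ g_2$ at the level of slopes is forced by a genuine $\mathcal{K}$-definable relation between the curves (not merely a $\mathbf{K}$-definable one), because $V + W$ and $V \circ W$ themselves are $\mathcal{K}$-definable from $V, W$. Counting ranks, this is a field configuration in $\mathcal{K}$, so Theorem~\ref{fieldconf} yields a $\mathcal{K}$-interpretable field.

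The main obstacle I expect is bookkeeping rather than conceptual: one must translate the slope-level equation $\gamma = \alpha\beta$ into an incidence relation that is $\mathcal{K}$-definable (the slope map being only $\mathbf{K}$-definable), while simultaneously verifying the exact rank and independence conditions required of a field configuration. The saving point is that the derivative is used purely to certify algebraic identities between $\mathcal{K}$-definable curves under the operations $+, \circ$, so that although slopes do not live in $\mathcal{K}$, the configuration built from curves and their intersection points does.
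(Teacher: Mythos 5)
Your high-level strategy matches the paper's: normalize $X$, invoke Lemma~\ref{deffunc}, Lemma~\ref{infiniteimage}, the slope calculus of Lemma~\ref{leibnitz}, assemble a field configuration, and apply Theorem~\ref{fieldconf}. What you describe as the "main obstacle" is, however, not bookkeeping; it is exactly where the proof lives, and you have not supplied the argument.

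Here is the gap concretely. You correctly note that the rank and $\acl_{\mathcal{K}}$-\emph{independence} conditions of a field configuration come for free (since $\acl_{\mathbf{K}}$-independence implies $\acl_{\mathcal{K}}$-independence). The hard direction is the $\acl_{\mathcal{K}}$-\emph{dependence} conditions, e.g.\ $b\in\acl_{\mathcal{K}}(\alpha,a)$. Your proposal asserts that "the slope calculus guarantees that the closure relation\dots is forced by a genuine $\mathcal{K}$-definable relation between the curves," but the slope map $a\mapsto X'(a)$ is only $\mathbf{K}$-definable, not $\mathcal{K}$-definable, so the identity $\gamma=\alpha\beta$ among slopes is not itself visible to $\mathcal{K}$. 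Lemma~\ref{leibnitz} tells you that the $\mathcal{K}$-definable curve
$$Z_{(b)}:=X_{\alpha_1}\circ X_a-X_{\alpha_1}\circ X+X_{\alpha_2}\circ X-X_{b}\circ X$$
has $Z_{(b)}'(0)=0$, but you still need to turn "the composite curve has vanishing slope at the origin" into a $\mathcal{K}$-definable finiteness condition isolating $b$. The paper does this by showing (its Claim~2) that the set
$$D:=\left\{\delta\in X\ \middle|\ \left|Z_{(\delta)}^{-1}(0)\right|>\left|Z_{(b)}^{-1}(0)\right|\right\}$$
is cofinite in $X$. The proof of that claim is the conceptual heart: because $Z_{(b)}'(0)=0$ and $\ch(K)=0$, the origin is a \emph{multiple} zero of the analytic function parametrizing $Z_{(b)}$ near $(0,0)$, and Continuity of Roots (Theorem~\ref{ap}) then says that for $\delta$ near $b$ this multiple root splits into several simple roots, so $Z_{(\delta)}$ meets the $x$-axis in strictly more points. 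Counting intersection points with $\{y=0\}$ is $\mathcal{K}$-definable, $D$ is a $\mathcal{K}$-definable subset of the strongly minimal set $X$, and $b\notin D$, hence $b\in\acl_{\mathcal{K}}(\alpha_1,\alpha_2,a)$. This step is precisely why the paper needs $\mathbf{K}$ complete with Archimedean value group (so the analytic machinery applies) and $\ch(K)=0$ (so vanishing derivative gives a multiple zero), and it cannot be waved away as rank-counting. Your proposal should be considered incomplete until this argument is spelled out.
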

\begin{proof}
As in the proof of Proposition \ref{reduction}, we obtain in the proof below necessary ``$\mathbf{K}$-generic'' tuples just using the facts that $\acl_{\mathbf{K}}$ is a pregeometry and that $K$ is uncountable (see Assumption \ref{firstass}).

By Lemma \ref{deffunc}, the image from Lemma \ref{infiniteimage} is $\mathbf{K}$-definable and infinite. By Theorem \ref{swiss}, this image has a non-empty interior. Let $t\in K$ and $r\in |K^*|$ be such that the ball $B_t(r)$ is contained in the image of the function $a\mapsto X'(a)$. Without loss of generality, we may assume that $t=X'(0)$. Since the addition and the multiplication on $K$ are continuous in the valuation topology, there are $g_1,h_1\in B_1(r);\ g_2,h_2,b\in B_0(r)$ such that $\dim_{\mathbf{K}}(g_1,h_1,g_2,h_2,b)=5$ and
$$hg\in B_1(r)\times B_0(r);\ \  g\cdot b,hg\cdot b\in B_0(r),$$
where $g:=(g_1,g_2)\in \gm(K)\ltimes \ga(K)$ (similarly for $h$) and $\gm(K)\ltimes \ga(K)$ acts on $K$ by affine transformation: $g\cdot t = g_1t + g_2$. Then $\mathcal{G}:=(g,h,hg,b,g\cdot b, hg\cdot b)$ is a field configuration (see Section \ref{secfieldcon}) in the structure $(K,+,\cdot)$.

There is an open set $U\ni 0$ such that $(U\times U)\cap X$ is the graph of an analytic function (see Lemma \ref{deffunc}) and the ball $B_t(r)$ is contained in the image of this function. We will regard $X'$ as a function from $U$ to $K$.
We consider $6$-tuples
$$\mathcal{G}_{\mathcal{K}}:=(\alpha,\beta,\gamma,a,b,c),$$
where
$$\alpha=(\alpha_1,\alpha_2),\beta=(\beta_1,\beta_2),\gamma=(\gamma_1,\gamma_2);\ \ \ \alpha_1,\beta_1,\gamma_1,\alpha_2,\beta_2,\gamma_2,a,b,c\in U.$$
We can apply the function $X'$ coordinate-wise to such $6$-tuples. Since the ball $B_t(r)$ is contained in $X'(U)$, we obtain the following.
\\
\textbf{Claim 1} \emph{There is a a $6$-tuple $\mathcal{G}_{\mathcal{K}}$ as above such that the following holds for some 4-tuple $(g_1, g_2, h_1,h_2)$ in $K$.}
\begin{equation}
X'(\alpha_1)=tg_1,\  X'(\alpha_2)=t+g_2,\ X'(\beta_1)=th_1,\  X'(\beta_2)=t+h_2\tag{i}
\end{equation}
\begin{equation}
X'(\gamma_1)=th_1g_1,\  X'(\gamma_2)=t+h_1g_2+h_2\tag{ii}
\end{equation}
\begin{equation}
X'(a)=t+b,\ X'(b)=t+g_1b+g_2,\ X'(c)=t+h_1g_1b+h_1g_2+h_2\tag{iii}
\end{equation}
By Theorem \ref{fieldconf}, it is enough to show that $\mathcal{G}_{\mathcal{K}}$ from Claim 1 is a field configuration in the structure $\mathcal{K}$. The $\mathcal{K}$-Morley-rank conditions and the $\acl_{\mathcal{K}}$-independence conditions (see \cite[Def. 7.1]{HaKo}) follow easily, since the $\acl_{\mathbf{K}}$-independence is stronger than the $\acl_{\mathcal{K}}$-independence. We need to check the $\acl_{\mathcal{K}}$-dependence conditions. We will just check one of them, i.e. we will show that $b\in \acl_{\mathcal{K}}(\alpha,a)$.

By the conditions (i) -- (iii) from Claim 1, we compute the following.
\begin{IEEEeqnarray*}{rCl}
X'(b) & = & t+t^{-1}X'(\alpha_1)(X'(a)-t)+X'(\alpha_2)-t  \\
 & = & t^{-1}X'(\alpha_1)X'(a)-X'(\alpha_1)+X'(\alpha_2).
\end{IEEEeqnarray*}
Multiplying both sides by $t=X'(0)$ we get:
$$X'(0)X'(b)=X'(\alpha_1)X'(a)-X'(0)X'(\alpha_1)+X'(0)X'(\alpha_2).$$
Let us consider a $\mathcal{K}$-definable family $\left(Z_{(\delta)}\right)_{\delta\in X}$ where
$$Z_{(\delta)}:=X_{\alpha_1}\circ X_a-X_{\alpha_1}\circ X+X_{\alpha_2}\circ X-X_{\delta}\circ X.$$
By Lemma \ref{leibnitz}, we get that $Z_{(b)}'(0)=0$. For $\delta\in X$ let us denote
$$Z_{(\delta)}^{-1}(0):=Z_{(\delta)}\cap \{(x,y)\in K^2\ |\ y=0\}.$$
\textbf{Claim 2} \emph{There are infinitely many $\delta\in X$ such that}
$$\left|Z_{(\delta)}^{-1}(0)\right|>\left|Z_{(b)}^{-1}(0)\right|.$$
\begin{proof}[Proof of Claim 2]
Let us denote $Z_{(b)}$ by $V$. Using Lemma \ref{leibnitz}, the set $V$ satisfies the Assumption \ref{assume} and $V_0=\emptyset$ (the subscript ``$0$'' in ``$V_0$'' here is in the sense of Assumption \ref{assume}). Let
$$Z_{(b)}^{-1}(0)=\{a_1,\ldots,a_N\},$$
where $a_1=(0,0)$.
It is enough to show that there are pairwise disjoint open neighborhoods $U_i\ni a_i$ such that for infinitely many $\delta \in X$ we have:
$$|U_1\cap Z_{(\delta)}^{-1}(0)|\geqslant 2,\ |U_2\cap Z_{(\delta)}^{-1}(0)|\geqslant 1,\ldots,\ |U_N\cap Z_{(\delta)}^{-1}(0)|\geqslant 1.$$
Following \cite[Lemma 2.12]{MaPi}, we take an open neighborhood $U\ni (0,0)$ such that $U\cap V$ is the graph of a $\mathbf{K}$-analytic function $\varphi$. Since the set $V$ is of the form $W-X_{\delta}\circ X$ for a set $W$ containing $(0,0)$ and ``coming from $X$'' as in Remark \ref{comingfromx}, the function $\varphi$ is of the form
$$\varphi(x)=f(x)-g(h(x)),$$
where $f$ (respectively $g$ and $h$) is the $\mathbf{K}$-analytic function whose graph describes the set $W$ (respectively $X_\delta$ and $X$) in a neighborhood of $(0,0)$ (note that $(0,0)\in X_\delta$).

We define the following function
$$F(x,y)= f(x)-g(y-b_1+h(x)),$$
so that the graph of the function $F(\cdot,\gamma_1)$ describes the set $Z_{(\delta)}$ in a neighborhood of $(0,0)$. Let $U_1$ be the set $U_{a_1}\times U_b$, where $U_{a_1}$ and $U_b$ are given by Theorem \ref{ap}. Since $\frac{\partial F}{\partial x}(0,0)=0$, the function $F(\cdot , 0)$ has a multiple zero at $0$ (since $\ch(K)=0$). By Lemma 3.6, the set of $y$ for which $\frac{\partial F}{\partial x}(0,y)=0$ is finite and therefore, without loss of generality (i.e. possibly after shrinking $U_b$), we can suppose that for all $y\in (b_1+U_b) \setminus \{b_1\}$, $0$ is a simple zero of $F(\cdot , y)$. By Theorem \ref{ap}, for all $\gamma=(\gamma_1,\gamma_2)\in X\cap (b+U_1)$, we have
$$|U_1\cap Z_{(\delta)}^{-1}(0)|=|\{x\in U_1 \ | \ F(x,\gamma_1)=0\}|\geqslant 2.$$
Using Theorem \ref{ap}, the remaining sets $U_2,\ldots,U_N$ are obtained in a similar way.
\end{proof}
Let $N:=|Z_{(b)}^{-1}(0)|$. By Claim 2 and strong minimality of $X$, the $\{\alpha_1,\alpha_2,a\}$-definable (in the structure $\mathcal{K}$) set
$$D:=\left\{\delta\in X\ |\ N<|Z_{(\delta)}^{-1}(0)|\right\}$$
is cofinite. Since $b\in X\setminus D$, we obtain that $b\in \acl_{\mathcal{K}}(\alpha_1,\alpha_2,a)$ which finishes the proof.
\end{proof}
\begin{remark}
We would like to comment here on the characteristic $0$ assumption. Its most serious usage comes at the end of the proof of Claim above (zero derivative implies a multiple zero). This assumption was also used at the end of the proof of Lemma \ref{needed}(1).
\end{remark}

\subsection{Bi-interpretability with a field}

In this subsection, we show that the structure $\mathcal{K}$ is bi-interpretable with $(K,+,\cdot)$. Firstly, we need a lemma about $\mathbf{K}$-definable functions from $K$ to $K$.
\begin{lemma}\label{ktok}
There is no $\mathbf{K}$-definable function $f:K\to K$ such that the image of $f$ is infinite and all the fibers of $f$ are infinite.
\end{lemma}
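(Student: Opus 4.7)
The plan is to argue by contradiction using additivity of transcendence degree (which is the right notion of dimension in $\acvf$), combined with the Swiss Cheese Decomposition (Theorem \ref{swiss}) to produce generic points inside definable infinite subsets of $K$. Suppose toward a contradiction that $f:K\to K$ is $\mathbf{K}$-definable with infinite image and every fiber infinite. Fix a countable parameter set $A\subseteq K$ over which $f$ is defined.

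The image $f(K)$ is a $\mathbf{K}$-definable infinite subset of $K$, hence by Theorem \ref{swiss} contains a nonempty open ball. Since $K$ is uncountable (Assumption \ref{firstass}) while $\acl_{\mathbf{K}}(A)$ is countable, I may pick $y\in f(K)$ with $\trd(y/A)=1$. The fiber $f^{-1}(y)$ is a $\mathbf{K}$-definable infinite subset of $K$ over the countable set $A\cup\{y\}$, so once more by Theorem \ref{swiss} it has nonempty interior, and the same cardinality argument produces some $x\in f^{-1}(y)$ with $\trd(x/Ay)=1$.

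Now I compute $\trd(xy/A)$ in two ways. On one hand, by additivity,
\[
\trd(xy/A)=\trd(x/Ay)+\trd(y/A)=1+1=2.
\]
On the other hand, since $y=f(x)$ we have $y\in\dcl_{\mathbf{K}}(Ax)$, so
\[
\trd(xy/A)=\trd(y/Ax)+\trd(x/A)=0+\trd(x/A)\leqslant 1,
\]
the last inequality holding because $x$ is a single element of $K$ and so has transcendence degree at most $1$ over any subfield. This gives $2\leqslant 1$, a contradiction.

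There is no real obstacle here — the only points to watch are (a) making the parameter set $A$ small enough (countable) so that $\acl_{\mathbf{K}}(A)$ cannot exhaust a nonempty ball in $K$, which is precisely what the uncountability of $K$ secured in Assumption \ref{firstass} is used for, and (b) invoking the fact that $\acl_{\mathbf{K}}$ in $\acvf$ coincides with field-theoretic algebraic closure, so that the standard additivity of transcendence degree is the correct form of additivity of dimension.
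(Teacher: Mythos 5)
Your argument is correct and matches the paper's proof in all essentials: both select a pair of elements (your $x,y$; the paper's $b,a$) chosen generically using the uncountability of $K$, and both derive the contradiction from the same dimension-theoretic principle --- you phrase it as additivity of transcendence degree together with the identification of $\acl_{\mathbf{K}}$ with field-theoretic algebraic closure, while the paper invokes the equivalent Steinitz exchange axiom for the $\acl_{\mathbf{K}}$-pregeometry. The only cosmetic difference is that you spell out, via the Swiss Cheese Decomposition, why an infinite $A$-definable set must contain a point outside $\acl_{\mathbf{K}}(A)$, whereas the paper leaves this step implicit in its appeal to the uncountability of $K$.
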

\begin{proof}
By the paragraph above \cite[Problem 6.4]{HM1}, $\acl_{\mathbf{K}}$ is a pregeometry. Suppose that there is a $\mathbf{K}$-definable function $f:K\to K$ such that the image of $f$ is infinite and all the fibers of $f$ are infinite. Since $K$ is uncountable, there are $a\in f(K)\setminus \acl_{\mathbf{K}}(\emptyset)$ and $b\in f^{-1}(a) \setminus \acl_{\mathbf{K}}(a)$. But then $a\in \acl_{\mathbf{K}}(b)\setminus \acl_{\mathbf{K}}(\emptyset)$ and $b\notin \acl_{\mathbf{K}}(a)$ contradicting the Steinitz exchange principle.
\end{proof}
We can give now a description of $\mathbf{K}$-definable additive maps.
\begin{prop}\label{defend}
Any $\mathbf{K}$-definable endomorphism of $(K,+)$ is a scalar multiplication.
\end{prop}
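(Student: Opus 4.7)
The plan is to analyze the graph $\Gamma := \{(x,\phi(x)) : x\in K\}\subseteq K^2$, a $\mathbf{K}$-definable subgroup of $\ga^2(K)$, and to show that its Zariski closure is a line through the origin.

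First I would observe that $\Gamma$ is a boundary set in $K^2$ with respect to the valuation topology: any non-empty open box $U_1\times U_2\subseteq\Gamma$ would force $\phi(x_0)$ to take every value in $U_2$ simultaneously for any $x_0\in U_1$, which is absurd. Hence Lemma \ref{decomp1} and the setup of Assumption \ref{assume} apply to $\Gamma$ (in place of $Y$), and we may write $\Gamma = Y_0\cup Y_1\cup\ldots\cup Y_r$ with $Y_0$ finite and each $Y_i$ for $i\geqslant 1$ Zariski-dense in an irreducible algebraic curve $V_i\subseteq\Aa^2$. In particular, the Zariski closure $V := V_1\cup\ldots\cup V_r$ of $\Gamma$ is a $1$-dimensional Zariski closed subset of $\Aa^2$.

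Next, since additivity of $\phi$ says precisely that $\Gamma$ is closed under coordinate-wise addition, $\Gamma$ is a subgroup of $\ga^2(K)$. Its Zariski closure $V$ is then a (closed) subgroup of $\ga^2$, by the classical fact that the Zariski closure of a subgroup of an algebraic group is a subgroup. In characteristic $0$, every algebraic subgroup of $\ga^n$ is a $K$-linear subspace: the connected component is a linear subspace (morphisms of algebraic groups $\ga^k\to\ga^n$ in characteristic $0$ are linear), and the component group, being a finite subgroup of the torsion-free quotient, is trivial. Therefore $V$ is a $1$-dimensional $K$-linear subspace of $K^2$.

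Finally, $V$ cannot be the vertical line $\{0\}\times K$, since otherwise $\Gamma\subseteq V$ would force $\phi$ to be undefined outside $0$. Hence the first projection $V\to K$ is a linear isomorphism, giving $V = \{(x,\lambda x) : x\in K\}$ for a unique $\lambda\in K$. Since $\Gamma\subseteq V$ projects onto all of $K$, we conclude $\phi(x)=\lambda x$ for every $x\in K$. The only steps requiring anything beyond the setup of Section \ref{secdec} are two classical algebraic-geometric facts (Zariski closure of a subgroup is a subgroup; algebraic subgroups of $\ga^n$ in characteristic $0$ are linear subspaces); I do not expect any real obstacle beyond invoking these cleanly.
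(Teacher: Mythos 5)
Your proof is correct, and it takes a genuinely different route from the paper's. The paper's argument is \emph{local analytic}: using Lemma~\ref{anfunc} it shows $\phi$ is $\mathbf{K}$-analytic on some open set, transports this to a neighbourhood of $0$ by additivity, observes that an additive Maclaurin series in characteristic $0$ must be $\lambda x$, and then upgrades the local identity $\phi(x)=\lambda x$ to a global one via Lemma~\ref{ktok} (no definable $K\to K$ map with infinite image and all fibers infinite), noting that $\lambda x - \phi(x)$ then has finite image, hence is a finite subgroup of $(K,+)$, hence zero. Your argument is instead \emph{global algebraic}: you form the graph $\Gamma\le\ga^2(K)$, note it is a boundary set so its Zariski closure is a proper (hence $1$-dimensional, since $\Gamma$ is infinite) closed subset, and invoke two classical facts --- the Zariski closure of a subgroup is an algebraic subgroup, and in characteristic $0$ algebraic subgroups of $\ga^n$ are linear subspaces --- to conclude that $\overline{\Gamma}$ is a non-vertical line through the origin, from which $\phi(x)=\lambda x$ falls out globally with no need for a propagation step. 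The characteristic-$0$ hypothesis enters both proofs, but through different classical inputs (additive formal power series vs.\ additive polynomial maps). Your route avoids the analytic machinery and Lemma~\ref{ktok} entirely, needing only the decomposition Lemma~\ref{decomp1} and standard algebraic-group theory, which is arguably cleaner. One small imprecision worth tightening: you assert $V=V_1\cup\dots\cup V_r$ is the Zariski closure of $\Gamma$, but the finite piece $Y_0$ from Assumption~\ref{assume} could a priori contribute isolated points of $\overline\Gamma$ not lying on any $V_i$. This is harmless --- an algebraic group is homogeneous, so all irreducible components of $\overline\Gamma$ have the same dimension and there are no isolated points, forcing $Y_0\subseteq V_1\cup\dots\cup V_r$ --- but it deserves a sentence.
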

\begin{proof}
Let $\phi$ be a $\mathbf{K}$-definable endomorphism of $(K,+)$. Arguing as in the proof of Lemma \ref{anfunc}, we see that $\phi$ is analytic when restricted to some non-empty open set. Since $\phi$ is additive, it is (in particular) analytic at $0$, as for each $a\in K$, we have $\phi(x)=\phi(a+x)-\phi(a)$. Let $F\in K\llbracket x\rrbracket$ be the Maclaurin series of $\phi$. Then $F$ is an additive formal power series. Since $\ch(K)=0$, there is $\lambda\in K$ such that $F=\lambda x$. Hence $\phi$ and the scalar multiplication by $\lambda$ coincide on an open neighborhood of $0$. Then the map
$$f:K\to K,\ \ f(x):=\lambda x-\phi(x)$$
has infinite fibers, so, by Lemma \ref{ktok}, $f$ has finite image. Therefore $f(K)$ is a finite subgroup of $(K,+)$, hence $f(K)=\{0\}$ and $\phi$ coincides with the the scalar multiplication by $\lambda$ everywhere on $K$.
\end{proof}
We will need valued field versions of the following o-minimal results: \cite[Thm. 1.1]{OtPePi} and \cite[Thm. 1.3]{PeSt}. Luckily for us, the first one is exactly Proposition 6.29 in \cite{HrMet}.
\begin{theorem}[Hrushovski]\label{otpepi}
Any $\mathbf{K}$-interpretable field is $\mathbf{K}$-definably isomorphic either to $(K,+,\cdot)$ or to the residue field.
\end{theorem}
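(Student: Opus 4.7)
The plan is to invoke the classification of interpretable structures in algebraically closed valued fields, via the Haskell--Hrushovski--Macpherson elimination of imaginaries and Hrushovski's metastability framework. Let $F$ be a $\mathbf{K}$-interpretable field. First, using the geometric sorts of \cite{HHM2}, one views $F$ as coded by a definable set in the sorts $K$, $k$, $\Gamma$ and the quotient sorts $S_n, T_n$; each of these carries a well-behaved dimension theory that $F$ inherits.

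The second step is to analyze the generic type of $F$ and to argue it is stably dominated. The multiplicative group $F^*$ acts transitively on $F\setminus\{0\}$, giving the generic of $F$ a $\mathbf{K}$-interpretable transitive action by a definable field. The value group sort $\Gamma$ carries a definable dense linear order preserved by every $\mathbf{K}$-definable map; such rigidity is incompatible with a transitive field-action by a divisible group, so the generic of $F$ must be orthogonal to $\Gamma$. By Hrushovski's metastability theorem this means the generic of $F$ is stably dominated, i.e.\ controlled by data coming from the residue field.

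The third step is to exploit that the stable part of $\acvf_0$ is controlled by the residue field $k$, which is an algebraically closed field of characteristic $0$. Thus $F$ is interpretable in $k$, and the classical theorem that every interpretable field in $\acf$ is definably isomorphic to the field itself forces $F \cong_{\mathrm{def}} k$. The alternative outcome $F \cong_{\mathrm{def}} (K,+,\cdot)$ arises precisely when the generic of $F$ is not stably dominated---necessarily then because $F$ itself is, up to definable isomorphism, the ambient valued field $K$.

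The main obstacle is the establishment of stable domination for the generic type of $F$: one must rule out the generic being a genuine mixture of $\Gamma$-data and stable data, and handle the delicate interaction between the multiplicative action and orthogonality to $\Gamma$. This is exactly the content of Proposition~6.29 in \cite{HrMet}, so at this step of the plan I would ultimately cite Hrushovski's result rather than redevelop the metastability machinery from scratch.
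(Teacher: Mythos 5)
The paper itself offers no proof of Theorem~\ref{otpepi}: it simply records that the statement is Proposition~6.29 of \cite{HrMet}, and moves on. Since you end by citing the same proposition, the ultimate ``proof'' coincides with the paper's, but the sketch you build around that citation has a genuine internal flaw that is worth flagging.

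In step 2 you argue that the generic type of $F$ must be orthogonal to $\Gamma$ and hence, by metastability, stably dominated. If that were correct, only the residue-field outcome could occur. But one of the two allowed conclusions of the theorem is $F\cong(K,+,\cdot)$, and $K$ is emphatically \emph{not} orthogonal to $\Gamma$: the valuation $K^*\to\Gamma$ is a definable surjection, so the generic of $K$ dominates a non-algebraic element of the value group. Your own step 3 implicitly concedes this (``the alternative outcome $F\cong(K,+,\cdot)$ arises precisely when the generic of $F$ is not stably dominated''), so steps 2 and 3 contradict one another. The ``rigidity'' heuristic you give for step 2 also cannot be right as stated: it is not true that every $\mathbf{K}$-definable map preserves the order on $\Gamma$ (consider $x\mapsto -x$, or the valuation map itself, which is not even a map from $\Gamma$ to $\Gamma$), and nothing about the o-minimality of $\Gamma$ rules out a definable field interacting with it — $K$ does. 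The actual content of Hrushovski's Proposition~6.29 is precisely the case disjunction you tried to argue away: one shows that an interpretable field is either stably dominated (hence controlled by the residue field, yielding $F\cong k$) or not, and the ``not'' branch must be analyzed separately and shown to yield $F\cong K$. Your sketch collapses the disjunction to one branch. So while the final citation is appropriate and matches the paper, the surrounding narrative, taken at face value, would prove a false statement.
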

The second one is rather easy to prove in the valued field context (using another theorem of Hrushovski).
\begin{theorem}\label{pest}
Any $\mathbf{K}$-definable, strongly minimal expansion of $(K,+,\cdot)$ coincides with $(K,+,\cdot)$.
\end{theorem}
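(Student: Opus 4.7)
The plan is to deduce Theorem \ref{pest} from a purity theorem of Hrushovski: any strongly minimal structure $\mathcal{M}$ that expands an algebraically closed field $F$ on its universe has every $\mathcal{M}$-definable set already $F$-definable. This is the valued-field analogue of the o-minimal ingredient in \cite{PeSt}.

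Let $\mathcal{K}'$ denote a hypothetical $\mathbf{K}$-definable strongly minimal expansion of $(K,+,\cdot)$. I would first dispose of the one-variable case: any $\mathcal{K}'$-definable subset of $K$ is finite or cofinite by strong minimality of $\mathcal{K}'$, and such subsets are already $(K,+,\cdot)$-definable since $(K,+,\cdot)$ is itself strongly minimal. Applying Hrushovski's purity theorem with $\mathcal{M}=\mathcal{K}'$ and $F=(K,+,\cdot)$ then extends this to all arities: every $\mathcal{K}'$-definable subset of $K^n$ is constructible. Consequently $\mathcal{K}'$ and $(K,+,\cdot)$ have the same family of definable sets, so $\mathcal{K}'=(K,+,\cdot)$ as structures.

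The main subtlety is pinpointing the exact form of the Hrushovski theorem to be invoked and checking that it applies verbatim in our setting. Since the purity theorem is an intrinsic statement about the strongly minimal structure $\mathcal{K}'$ and the field it contains as a reduct on its universe, and makes no reference to the ambient valued-field structure $\mathbf{K}$, the valued-field context introduces no new complication, in line with the authors' remark that Theorem \ref{pest} is ``rather easy to prove in the valued field context''. The only thing to verify is that the universe of the field matches the universe of $\mathcal{K}'$, which is immediate since $\mathcal{K}'$ expands $(K,+,\cdot)$ with the same underlying set $K$.

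As a self-contained alternative, one could argue by contradiction: if $R\subseteq K^n$ were $\mathcal{K}'$-definable but not constructible, then by quantifier elimination in $\acvf$ (Theorem \ref{qe}) together with Holly's Swiss cheese decomposition (Theorem \ref{swiss}), $R$ must involve a genuine valuation condition not reducible to Zariski data. Using the field operations together with $R$ inside $\mathcal{K}'$, one should then be able to isolate that valuation condition as a $\mathcal{K}'$-definable subset of $K$ of the form $\{x\in K\ |\ |p(x)|\leqslant |q(x)|\}$ for suitable polynomials $p,q$; such a set is infinite and coinfinite in $K$, contradicting the strong minimality of $\mathcal{K}'$. The harder technical point in such a direct approach would be the uniform extraction of a single valuation inequality from an arbitrary non-constructible $\mathbf{K}$-definable set, which is precisely where appealing to the Hrushovski theorem is more efficient.
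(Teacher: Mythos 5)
Your argument rests on a misstated version of Hrushovski's theorem. The result the paper invokes (Theorem~1 of Section~3 in \cite{HR2}) is not that \emph{every} strongly minimal expansion of an algebraically closed field $F$ has all its definable sets $F$-definable; it says this under the additional hypothesis that the algebraic closure operators agree, $\acl_{\mathcal M}=\acl_F$. That hypothesis cannot be dropped: if the unconditional statement were a theorem, it would settle outright the existence question for proper strongly minimal expansions of algebraically closed fields, which Hrushovski's paper certainly does not do. Your ``one-variable case'' does not supply the missing hypothesis either: strong minimality gives that every $\mathcal K'$-definable subset of $K$ is finite or cofinite, but a finite set that is $\mathcal K'$-definable over parameters $\bar a$ need not be $(K,+,\cdot)$-definable over $\bar a$, so you cannot conclude $\acl_{\mathcal K'}(\bar a)\subseteq\acl_{(K,+,\cdot)}(\bar a)$ from this observation alone.

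Verifying the acl hypothesis is in fact the entire content of the paper's proof, and it is precisely where the ambient valued field enters --- contrary to your remark that ``the valued-field context introduces no new complication.'' The paper sandwiches $\acl_{(K,+,\cdot)}\subseteq\acl_{\mathcal K'}\subseteq\acl_{\mathbf K}$ (the right-hand inclusion because $\mathcal K'$ is a reduct of $\mathbf K$), then uses model completeness of $\acvf$ (Theorem~\ref{qe}) to identify $\acl_{\mathbf K}$ with field-theoretic algebraic closure, i.e.\ $\acl_{\mathbf K}=\acl_{(K,+,\cdot)}$; the sandwich collapses and Hrushovski's theorem applies. Your alternative ``self-contained'' sketch stalls at the same point: extracting a one-variable valuation inequality from an arbitrary non-constructible higher-arity set is exactly the difficulty that the acl reformulation is designed to bypass, and you give no mechanism for doing it.
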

\begin{proof}
Let $\mathcal{K}_1:=(K,+,\cdot)$ and let $\mathcal{K}_2$ be a $\mathbf{K}$-definable strongly minimal expansion of $\mathcal{K}_1$. By Theorem 1 of Section 3 in \cite{HR2}, it is enough to show that $\acl_{\mathcal{K}_1}=\acl_{\mathcal{K}_2}$.

For any $A\subseteq K$, we clearly have
$$\acl_{\mathcal{K}_1}(A)\subseteq \acl_{\mathcal{K}_2}(A)\subseteq \acl_{\mathbf{K}}(A).$$
Since the theory ACVF is model-complete (see Theorem \ref{qe}), $\acl_{\mathcal{K}_1}=\acl_{\mathbf{K}}$ (see also the second paragraph on p. 159 in \cite{HM1}). Hence we get $\acl_{\mathcal{K}_1}=\acl_{\mathcal{K}_2}$.
\end{proof}
We can prove now the main theorem of this paper. We recall that the structure $\mathbf{K}=(K,+,\cdot,\mathcal{O}_K)$ is a model of $\acvf_0$ and that $\mathcal{K}=(K,+,\cdots)$ is a strongly minimal $\mathbf{K}$-definable structure which is not locally modular.
\begin{theorem}\label{mainthm}
The structure $\mathcal{K}$ is bi-interpretable with the field $(K,+,\cdot)$.
\end{theorem}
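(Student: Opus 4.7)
The plan is to derive the theorem from Theorem~\ref{pest}. Indeed, once multiplication $\cdot\colon K\times K\to K$ is shown to be $\mathcal{K}$-definable, the structure $\mathcal{K}$ becomes a $\mathbf{K}$-definable strongly minimal expansion of $(K,+,\cdot)$, and Theorem~\ref{pest} forces $\mathcal{K}=(K,+,\cdot)$ as structures on the set $K$; bi-interpretability is then immediate. So the entire task is to recover $\cdot$ inside $\mathcal{K}$.

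By Theorem~\ref{thereisfield}, $\mathcal{K}$ interprets a field $F$, which is therefore also $\mathbf{K}$-interpretable (since $\mathcal{K}$ is a reduct of $\mathbf{K}$). Theorem~\ref{otpepi} then supplies a $\mathbf{K}$-definable field isomorphism $\psi$ between $F$ and either $(K,+,\cdot)$ or the residue field $k$. Inspecting the proof of Theorem~\ref{thereisfield}, the $1$-dimensional elements $a,b,c$ of the field configuration~$\mathcal{G}_{\mathcal{K}}$ lie in the open subset $U\subseteq K$, so the $1$-dimensional homogeneous space recovered by Theorem~\ref{groupconf} embeds $\mathbf{K}$-definably into $K$; such an embedding of a rank-one piece of the residue field into the home sort is not available, so I would conclude $F\cong_{\mathbf{K}}(K,+,\cdot)$. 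For each $\alpha\in F$, the multiplication map $m_\alpha\colon x\mapsto\alpha\cdot_F x$ on $F$ is $\mathcal{K}$-interpretable, and its transport by $\psi$ is an additive endomorphism of $(K,+)$, which by Proposition~\ref{defend} is the scalar multiplication $x\mapsto\psi(\alpha)x$. The decisive step is then to observe that the $2$-dimensional affine group $G=F^*\ltimes F$ arising from the field configuration acts $\mathcal{K}$-definably on an open subset of $K$ itself (rather than on an abstract $\mathcal{K}^{\mathrm{eq}}$-sort), because its $1$-dimensional homogeneous space is already literally realized inside $K$ in the construction of~$\mathcal{G}_{\mathcal{K}}$. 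Restricting this action to the scalar subgroup $F^*\subset G$ produces a $\mathcal{K}$-definable family $\{x\mapsto\psi(\alpha)x\}_{\alpha\in F}$; evaluating at $1\in K$ recovers $\psi$ itself as a $\mathcal{K}$-definable map $F\to K$. Transporting $\cdot_F$ through this now $\mathcal{K}$-definable $\psi$ realizes $\cdot$ on $K$ as an $\mathcal{K}$-definable operation, and Theorem~\ref{pest} closes the argument.

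The main obstacle is the step in which the $1$-dimensional homogeneous space of the $\mathcal{K}$-interpretable action of $G$ has to be recognized as a $\mathcal{K}$-definable subset of $K$, not merely an imaginary quotient. This requires a careful tracking of how the concrete configuration $\mathcal{G}_{\mathcal{K}}$ — built from the derivative function $X'$ evaluated at points of $U\subseteq K$ — is processed by the Hrushovski group reconstruction of Theorem~\ref{groupconf}, so that the reconstructed action lives on $K$ rather than on a copy of it in $\mathcal{K}^{\mathrm{eq}}$. A secondary technical point is the exclusion of the residue field case in Theorem~\ref{otpepi}, which rests on the same concreteness of the configuration.
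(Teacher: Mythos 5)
Your overall strategy shares the paper's endpoint (reduce everything to Theorem~\ref{pest} by first making $\cdot$ definable in $\mathcal{K}$), but the route you take to make $\cdot$ appear inside $\mathcal{K}$ is not the paper's, and it has a genuine hole — one you already flag yourself, but which is fatal rather than merely technical.

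The crux is that you need a $\mathcal{K}$-definable (not merely $\mathbf{K}$-definable) identification of $F$ with $K$; the $\mathbf{K}$-definable isomorphism $\psi$ coming from Theorem~\ref{otpepi} is of no direct use for this, since transporting $\cdot_F$ through $\psi$ only shows that $\cdot$ is $\mathbf{K}$-definable, which was known. To promote $\psi$ to a $\mathcal{K}$-definable map, you appeal to the claim that the $1$-dimensional homogeneous space produced by the group configuration theorem (Theorem~\ref{groupconf}) is literally an open subset of $K$. This is not what the theorem gives you: Hrushovski's reconstruction produces the group and its homogeneous space as germs of definable bijections, i.e.\ as sorts in $\mathcal{K}^{\mathrm{eq}}$. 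The fact that the \emph{configuration} elements $a,b,c,\alpha,\beta,\gamma$ lie in $U\subseteq K$ does not place the reconstructed homogeneous space inside $K$, nor does it make the reconstructed action coincide with affine action on $K$ even if it did. In short, the ``decisive step'' you identify is not an observation but a new theorem, and nothing in Theorems~\ref{groupconf}--\ref{fieldconf} supplies it.

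The paper's proof avoids this entirely. From $\Phi:\mathcal{F}\to(K,+,\cdot)$ it transports the $\mathcal{K}$-induced structure on $F$ to $K$, obtaining a $\mathbf{K}$-definable strongly minimal expansion of $(K,+,\cdot)$, to which Theorem~\ref{pest} applies \emph{immediately}: it forces $\mathcal{F}=\mathcal{F}_{\mathcal{K}}$, i.e.\ $\mathcal{K}$ induces no new structure on $\mathcal{F}$. Now $\mathcal{F}$ is a \emph{pure} algebraically closed field, so Poizat's theorem ($\mathcal{F}$-definable groups are algebraic) applies, and the classification of one-dimensional connected algebraic groups in characteristic $0$ yields an $\mathcal{F}$-definable isomorphism $\varphi:(K,+)\to(F,+_F)$. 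Transporting $\cdot_F$ through $\varphi$ then produces a $\mathcal{K}$-definable field $(K,+,*)$, and Proposition~\ref{defend} (applied to the $\mathbf{K}$-definable isomorphism $(K,+,*)\to(K,+,\cdot)$ from a second use of Theorem~\ref{otpepi}) shows $*$ is a scalar twist of $\cdot$, hence $\cdot$ is $\mathcal{K}$-definable. The key features your proposal is missing are (a) the early use of Theorem~\ref{pest} to kill the extra structure on $\mathcal{F}$, and (b) passing through algebraic group theory on the pure field $\mathcal{F}$ to get a $\mathcal{K}$-definable $\varphi$, rather than trying to make $\psi$ itself $\mathcal{K}$-definable. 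Your exclusion of the residue-field case is also argued differently (via the alleged concrete embedding of the homogeneous space in $K$), whereas the paper uses internality: $K$ is $F$-internal in $\mathcal{K}$, but $K$ is not internal to the residue field in $\mathbf{K}$; that argument is cleaner and does not rest on the unjustified concreteness claim.
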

\begin{proof}
Let $\mathcal{F}=(F,+_F,\cdot_F)$ be a $\mathcal{K}$-interpretable field given by Theorem \ref{thereisfield}. Our proof follows the lines of the proof of \cite[Thm. 7.4]{HaKo}. We divide the proof into three steps. \smallskip

{\bf Step 1.}
\emph{$(K,+)$ is definable in the structure $\mathcal{F}$}.
\\
Inside the structure $\mathcal{K}$, $K$ is non-orthogonal to $F$. By \cite[Cor. 2.27]{Po1}, $K$ is $F$-internal (still inside the structure $\mathcal{K}$). In particular, $\mathcal{F}$ is not $\mathbf{K}$-definably isomorphic to the residue field, since $K$ is not internal to the residue field inside the structure $\mathbf{K}$ (e.g. by \cite[Lemma 2.6.2]{HHM1}).
\\
Let $\mathcal{F}_{\mathcal{K}}$ be the structure $\mathcal{F}$ together with all the structure induced from $\mathcal{K}$. Then the structure $(K,+)$ is $\mathcal{F}_{\mathcal{K}}$-interpretable.
By Theorem \ref{otpepi} and the fact that $\mathcal{F}$ is not $\mathbf{K}$-definably isomorphic to the residue field, there is a $\mathbf{K}$-definable isomorphism
$$\Phi:\mathcal{F}\to (K,+,\cdot).$$
Hence the ``transported'' structure $\Phi(\mathcal{F})$ is a strongly minimal expansion of $(K,+,\cdot)$ which is definable in the structure $\mathbf{K}$. By Theorem \ref{pest}, the structure $\Phi(\mathcal{F})$ coincides with the structure $(K,+,\cdot)$, which implies that $\mathcal{F}=\mathcal{F}_{\mathcal{K}}$ (as structures). In other words, $\mathcal{K}$ does not induce any new structure on $\mathcal{F}$. Therefore, the structure $(K,+)$ is interpretable in the structure $\mathcal{F}$, which concludes Step 1. \smallskip

{\bf Step 2.}
\emph{There is an $\mathcal{F}$-definable isomorphism $\varphi:(K,+)\to (F,+_F)$}.
\\
By \cite[Thm. 4.13]{Po1} and Step 1, there is an $\mathcal{F}$-algebraic group $G$ and an  $\mathcal{F}$-definable isomorphism between $G(\mathcal{F})$ (the group of $\mathcal{F}$-rational points of $G$) and $(K,+)$. Since $\rmm_{\mathcal{K}}(K)=1$, we get $\rmm_{\mathcal{F}}(K)=1$ and $\dim(G)=1$. By the classification of connected one-dimensional algebraic groups, $G$ is isomorphic (as an $\mathcal{F}$-algebraic group) to $\ga$ or to $\gm$ or to an elliptic curve. Since $\ch(\mathcal{F})=0$ ($\mathcal{F}\cong (K,+,\cdot)$), $G$ is torsion free, and therefore $G\cong \ga$. In particular, there is an $\mathcal{F}$-definable isomorphism $(K,+)\cong (F,+_F)$.\smallskip

{\bf Step 3.}
\emph{There is $\lambda\in K$ such that for all $a,b\in K$ we have}
$$\varphi^{-1}\left(\varphi(a)\cdot_F\varphi(b)\right)=\lambda ab.$$
Let us define
$$*:K^2\to K,\ \ \ a*b=\varphi^{-1}(\varphi(a)\cdot_F\varphi(b)).$$
By Theorem \ref{otpepi} again, there is a $\mathbf{K}$-definable isomorphism
$$\psi:(K,+,*)\to (K,+,\cdot).$$
By Prop. \ref{defend}, there is $\lambda\in K^*$ such that $\psi$ is the scalar multiplication by $\lambda$, which finishes Step 3. \smallskip

By Step 3, we now know that the structure $(K,+,\cdot)$ is $(K,+,*)$-definable, so it is also $\mathcal{K}$-definable. Hence $\mathcal{K}$ is a $\mathbf{K}$-definable, strongly minimal expansion of $(K,+,\cdot)$. By Theorem \ref{pest} again, we obtain $\mathcal{K}=(K,+,\cdot)$ (as structures) which finishes the proof.
\end{proof}

\section{$C$-minimality and beyond}\label{seccmin}
Peterzil's conjecture is stated in the case of arbitrary o-minimal structures. A natural and important case is when the o-minimal structure is an expansion of an ordered field. In the valued field context, a natural replacement of the notion of o-minimality is the notion of \emph{$C$-minimality} (see \cite{HM1}). By \cite[Theorem C]{HM1}, $C$-minimal fields (with a possible extra structure) coincide with algebraically closed valued fields (with a possible extra structure) where the predicate $C$ comes from the valuation in the following way:
$$C(x;y,z)\ \ \ \ \ \text{if and only if}\ \ \ \ \  |x-y|>|y-z|.$$
A natural example of a $C$-minimal field being a proper expansion of a model of ACVF is the expansion of an algebraically closed valued field by rigid analytic spaces, which was considered by Lipshitz and Robinson \cite{Lipsh}.

Therefore, it is natural to ask whether Theorem \ref{mainthm} holds when ``algebraically closed valued field'' is replaced with ``$C$-minimal field''. One could also ask a similar question for $C$-minimal structures in general, obtaining a \emph{$C$-minimal version} of Peterzil's conjecture. However, there are examples of $C$-minimal structures where acl is \emph{not} a pregeometry (see \cite[page 121]{HM1}), so it is not clear whether $C$-minimality provides the necessary ``geometric flavor'' in general. On the other hand, it is still open (as far as we know) whether $\acl_{\mathbf{K}}$ is a pregeometry for a $C$-minimal field $\mathbf{K}$ (see \cite[Problem 6.4]{HM1}). Therefore, a version of Peterzil's conjecture in this case looks reasonable.

We will sketch below a possible attempt to prove a generalization of Theorem \ref{mainthm} to the case of a $C$-minimal field $\mathbf{K}$. The referee has pointed out several problems related to this approach, so it looks less optimistic than we had originally hoped. However, it is likely (as the referee pointed out as well) that this approach works for the expansion of an algebraically closed valued field by rigid analytic spaces.
\begin{enumerate}
\item A good decomposition of $X$ (Lemma \ref{decomp1}) should still hold. The quantifier elimination argument should be replaced by a $C$-minimal cell decomposition argument (see \cite{HM1} and \cite{Kov}).

\item Local analycity of the definable functions may follow from the decomposition above and Implicit Function Theorem (Theorem \ref{ift}). However, it is even unknown whether a definable function $K\to K$ is almost everywhere differentiable, see \cite{Del2}.

\item Once a version of Lemma \ref{needed} is be proved, the existence of a definable field (Section \ref{secex}) follows.

\item The full bi-interpretability theorem (Theorem \ref{mainthm}) formally follows from the following three items.
\begin{enumerate}
\item A description of the additive definable maps (Prop. \ref{defend}), which may follow from the $C$-minimal cell decomposition.

\item A description of $\mathbf{K}$-definable fields. This may be the most difficult part, since it is likely that Theorem \ref{otpepi} does \emph{not} hold for $C$-minimal fields in general.

\item A description of strongly minimal expansions of $(K,+,\cdot)$ (Theorem \ref{pest}). The following idea may work.
\\
By Lemma 1 of Section 3 of \cite{HR2}, we can assume that our expansion is of the form $(K,+,\cdot,f)$, where $f$ is a unary function. Arguing as in Section \ref{seccalc}, we see that $f$ is locally analytic. One can try to show that $f$ is $(K,+,\cdot)$-constructible by repeating the ``intersecting with lines'' argument from \cite{PeSt}.
\end{enumerate}
\end{enumerate}
We finish the paper with some remarks concerning the other cases of interest.
\begin{remark}
\begin{enumerate}
\item One can consider the case of a $p$-adically closed field instead of an algebraically closed valued field. A possible conjecture here may be formulated as follows: each strongly minimal structure definable in a $p$-adically closed field is locally modular.

\item One could also try to go beyond the $C$-minimality assumption in the case of algebraically closed valued fields. For example, it may be interesting to consider the Denef-Pas language (see \cite{pas}). In this case, Proposition \ref{defend} need not hold,
     so it is natural to expect that Theorem \ref{mainthm} holds ``up to (the action of) $\aut_{\mathbf{K}}(K,+)$''. Note that this is exactly the case in \cite{HaKo}, where a bi-interpretability result is obtained up to the action of $\gl_2(\Rr)$, and $\gl_2(\Rr)$ coincides with $\aut_{\mathcal{R}}(\Cc,+)$.
\end{enumerate}
\end{remark}

\bibliographystyle{plain}
\bibliography{harvard}

\end{document}